\theoremstyle{plain}
\newtheorem{theorem}{Theorem}[section]
\newtheorem{lemma}[theorem]{Lemma}
\newtheorem{corollary}[theorem]{Corollary}
\theoremstyle{definition}
\theoremstyle{remark}
\tikzset{negated/.style={
        decoration={markings,
            mark= at position 0.5 with {
                \node[transform shape] (tempnode) {${\scriptstyle\setminus} $};
            }
        },
        postaction={decorate}
    }
}
\tikzset{degil/.style={
            decoration={markings,
            mark= at position 0.5 with {
                  \node[transform shape] (tempnode) {$\backslash$};
                  }
              },
              postaction={decorate}
}
}
\title[Symbolic dynamics of piecewise contractions]{Symbolic dynamics of piecewise contractions}
\subjclass[2000]{Primary 37E05, 37B10 Secondary 54H20}
\keywords{Piecewise contraction, topological dynamics, symbolic dynamics}
\begin{document}

\maketitle

\centerline{\scshape Benito Pires}

{\footnotesize
	\centerline{Departamento de Computa\c c\~ao e Matem\'atica, Faculdade de Filosofia, Ci\^encias e Letras}
	\centerline {Universidade de S\~ao Paulo, 14040-901, Ribeir\~ao Preto - SP, Brazil}
	\centerline{benito@usp.br} } 

\bigskip

\marginsize{2.5cm}{2.5cm}{1cm}{2cm}

\begin{abstract} 
A map $f{:}\,[0,1)\to [0,1)$ is a {\it piecewise contraction of $n$ intervals} ($n$-PC) if there exist $0<\lambda<1$ and a partition of $I=[0,1)$ into intervals $I_1,I_2,\ldots,I_n$  such that $\left\vert f(x)-f(y)\vert\le \lambda\vert x-y\right\vert$ for every $x,y \in I_i$ ($i=1,2,\ldots,n$). An infinite word $\theta=\theta_0\theta_1\ldots$ over the alphabet $\mathcal{A}=\{1,\ldots,n\}$ is a {\it natural coding of} $f$ if there exists $x\in I$ such that
$\theta_k=i$  whenever $f^k(x)\in I_i$. We prove that if $\theta$ is a natural coding of an injective $n$-PC, then some infinite subword of $\theta$
  is either periodic or isomorphic to a natural coding of a topologically transitive $m$-interval exchange transformation ($m$-IET), where $m\le n$. Conversely, every natural coding of a topologically transitive $n$-IET is also a natural coding of some injective $n$-PC.\end{abstract}

\maketitle

\section{Introduction}

Throughout this article, let $I=[0,1)$ denote the unit interval. A map $f{:}\,I\to I$ is a {\it piecewise contraction of $n$ intervals} ($n$-PC) if there exist $0< \lambda <1$ and a partition of $I$ into non-degenerate intervals $I_1,\ldots,I_n$ such that $f\vert_{I_i}$ is $\lambda$-Lipschitz for every $1\le i\le n$. If, in particular, there exist $b_1,\ldots,b_n\in \mathbb{R}$ and $\sigma_1,\ldots,\sigma_n\in\{-1,1\}$ such that $f(x)=\sigma_i\lambda x+b_i$ for every $x\in I_i$, then we say that $f$ is a {\it piecewise $\lambda$-affine contraction}. 

The {\it natural $f$-coding of a point $x\in I$} is the infinite word $\theta_f(x)=\theta_0 \theta_1\ldots$ defined by $\theta_k=i$  whenever $f^k(x)\in I_i$, where $f^0$ denotes the identity map. 
We say that an infinite word $\theta$ is a {\it natural coding of $f$} if $\theta=\theta_f(x)$ for some $x\in I$. We say that $\theta$ is \textit{ultimately periodic} (respectively, \textit{periodic}) if there exist finite subwords $u,v$ of $\theta$ such that $\theta=uvv\ldots$ (respectively, $\theta=vv\ldots$).
The {\it language $\mathcal{L}(\theta)$} of a natural coding $\theta$ is the union of the sets
$L_k(\theta)=\{\theta_{m}\theta_{m+1}\cdots\theta_{m+k-1}:m\ge 0\}$ of finite subwords of length $k$ occuring in $\theta$, where $L_0$ is the one-point-set formed by the empty word. 

In this article, we give a complete and systematic description of the languages of injective $n$-PCs, $n\ge 2$, by providing a dictionary between these languages and the fairly well-understood languages of interval exchange transformations (IETs). We also provide converse results which enable us to construct $n$-PCS with any prescribed admissible coding.

The first point addressed in this article consists in providing the list of all admissible natural codings of injective $n$-PCs. Natural codings of piecewise contractions defined on $2$ intervals (or more generally, defined on $2$ complete metric spaces) were provided by Gambaudo and Tresser \cite{GT1} and are intrinsically related to natural codings of  rotations of the circle. Concerning languages of injective $n$-PCs $f{:}\,I\to I$ for $n>2$, some progress was made recently by Catsigeras, Guiraud and Meyroneinc  \cite{CGM}. They proved that for each natural coding $\theta$ of $f$, the complexity function of the language
$\mathcal{L}(\theta)$, defined by $p_{\theta}(k)=\#L_k(\theta)$, where $\#$ denotes cardinality, is eventually affine. 

The second point concerns the problem of how to construct $n$-PCs with any prescribed list of admissible natural codings.
In this regard, it follows from the works \cite{NP,NPR1,NPR2} that a generic $n$-PC admits only ultimately periodic natural codings. Therefore, $n$-PCs with no ultimately periodic natural coding are exotic and their construction is a nontrivial issue.
 The existence of $2$-PCs having no ultimately periodic natural coding is related to the existence of smooth flows on the $2$-torus with pathological dynamics (see Cherry \cite{Cherry}). More generally, $2$-PCs topologically semiconjugate to irrational rotations have being constructed and studied via a rotation number approach (see \cite{B1,B2,BC,GT2,JO,LNo}). Here we address the second point in full generality, by using another approach, based on the existence of an invariant measure (see \cite{BP}). In particular, we prove that every minimal $n$-IET, with $n\ge 2$, with or without flips, is a topological factor of an $n$-PC with no ultimately periodic natural coding. This combined with Keane's irrationality criteria \cite[p. 27]{MK} provides a huge class of exotic $n$-PCs. Since every irrational rotation can be considered as a minimal $2$-PC, the previous results fit into our framework.   
 
 As for the motivation to study $n$-PCs, it is worth remarking that they describe pretty well the dynamics of some Cherry flows on $2$-manifolds, dissipative outer billiards, traffic systems, queueing systems and switched server systems.\\
 
 \noindent{\textbf{Acknowledgments.}} The author is very grateful to Filipe Fernandes and Francisco Braun whose comments  contributed to the improvement of the first version. The author was partially supported by grant \# 2018/06916-0, S\~ao Paulo Research Foundation (FAPESP) and by CNPq.
  
\section{Statement of the results}

A bijective map $T{:}\,I\to I$ is an {\it $n$-interval exchange transformation} ($n$-IET) if there exist a partition $I_1,\ldots,I_n$ of $I$ into non-degenerate intervals,  $b_1,\ldots,b_n\in \mathbb{R}$ and $\sigma_1,\ldots,\sigma_n\in\{-1,1\}$ such that $T(x)=\sigma_i x+b_i$ for every $x\in I_i$. an $n$-IET $T$ is {\it standard} if $I_i$ a left-closed right-open interval and $T\vert_{I_i}$ is the translation $x\mapsto x+b_i$ for every $1\le i\le n$.  
Following \cite{N}, we say that a non-standard $n$-IET $T$ {\it has flips} if, for some $1\le i\le n$, $T\vert_{I_i}$ is the map $x\mapsto -x+b_i$. We say that an $n$-IET $T{:}\,I\to I$ is \textit{irreducible} if there is no $0<\delta<1$ such that
$T\big([0,\delta)\big)\subset [0,\delta)$ or, equivalently, if there is no $1\le j\le n-1$ such that $T\big(\cup_{i=1}^j I_i\big)\subset \cup_{i=1}^j I_i$.

An $n$-IET $T{:}\,I\to I$ is: {\it topologically transitive} if it has a dense $T$-orbit $\left\{x,T(x),\ldots\right\}$, {\it minimal} if every $T$-orbit is dense, and {\it aperiodic} if it has no periodic orbit. A periodic orbit $\gamma$ is {\it attractive} if there exists an open set $U_{\gamma}$  such that $\omega(x)=\gamma$ for every $x\in U_{\gamma}$. In a standard $n$-IET, every periodic orbit is attractive, thus in this case topological transitivity is equivalent to minimality. An $n$-IET $T$ satisfies the {\it infinite distinct orbit condition} (i.d.o.c.) if the orbits $\{x_i,T(x_i),\ldots\}$
 of its discontinuities $x_i$, $1\le i\le n-1$, are infinite and pairwise disjoint. Keane  \cite{MK} proved that every irreducible standard $n$-IET, $n\ge 2$, satisfying the i.d.o.c. is {\it minimal}. The {\it natural $T$-coding} of a point $x\in I$ is the infinite word $\theta_T(x)=\theta_0\theta_1\ldots$ defined by $\theta_k=i$ whenever $T^k(x)\in I_i$. If $T$ is an irreducible standard $n$-IET satisfying the i.d.o.c., then the language $\mathcal{L}(\theta)$ of a natural $T$-coding $\theta$ is the same for any $\theta$. In this case, we define the {\it language of $T$}, denoted by $\mathcal{L}$, to be the language $\mathcal{L}(\theta)$ of any of its natural $T$-codings.
  
The {\it alphabet} $\mathcal{A}(\theta)$ of an infinite word $\theta=\theta_0 \theta_1\ldots$ is the set of letters that occur in $\theta$. We say that two infinite words $\theta=\theta_0 \theta_1\ldots$ and $\omega=\omega_0\omega_1\ldots$ are {\it isomorphic} if their alphabets
$\mathcal{A}({\theta})$ and $\mathcal{A}({\omega})$ have the same cardinality and there is a bijection $\pi{:}\,\mathcal{A}({\theta})\to\mathcal{A}({\omega})$ such that $\omega_k=\pi(\theta_k)$ for every $k\ge 0$. For example, the  infinite words
$$
\theta = 010010001\ldots \quad\textrm{and}\quad  \omega = 121121112\ldots
$$
are isomorphic because $\mathcal{A}(\theta)=\{0,1\}$ and $\mathcal{A}(\omega)=\{1,2\}$ have the same cardinality and
$\omega_k=\pi(\theta_k)$ for every $k\ge 0$, where the bijection $\pi:\mathcal{A}(\theta)\to \mathcal{A}(\omega)$ is given by 
$\pi(0)=1$ and $\pi(1)=2$. 

Our main results are the following.

\begin{theorem}\label{thm1} Let $f{:}\,I\to I$ be an injective $n$-PC, then there exist $2\le m\le n$ and an $m$-IET $T{:}\,I\to I$ without attractive periodic orbits such that for each $x\in I$ there exists an integer $k\ge 0$ such that the natural $f$-coding of $f^k(x)$  is either  periodic or 
isomorphic to a non ultimately periodic natural coding of $T$.
\end{theorem}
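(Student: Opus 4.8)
The plan is to split the points of $I$ according to the asymptotics of their $f$-orbit, to manufacture a single interval exchange $T$ by collapsing the gaps of the attractor of $f$ with the help of an $f$-invariant measure, and then to match the two families of codings. First I would dispose of the tautological part: if $\theta_f(x)=uvv\ldots$ is ultimately periodic with $u,v$ finite, then $\theta_f(f^{|u|}(x))=vv\ldots$ is periodic, which is the first alternative in the statement; so it suffices to treat the points $x$ whose coding $\theta_f(x)$ is \emph{not} ultimately periodic, and (if there are none) to take for $T$ any irrational rotation viewed as a $2$-IET. Assume such $x$ exist. Let $\Lambda=\bigcap_{k\ge 0}\overline{f^k(I)}$. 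Since each $f\vert_{I_i}$ is $\lambda$-Lipschitz and $f$ is injective, the images $f^k(I)$ are finite unions of intervals of total length $\le\lambda^k$, so $\Lambda$ is a nonempty compact Lebesgue-null set with $f(\Lambda)=\Lambda$ and $\omega(x)\subseteq\Lambda$ for every $x\in I$. By the structure theory of injective $n$-PCs (see \cite{CGM,NP,NPR1,NPR2}) one has: $\theta_f(x)$ is not ultimately periodic if and only if $\omega(x)$ is a minimal Cantor subset of $\Lambda$; there are only finitely many such minimal Cantor sets $\Sigma_1,\ldots,\Sigma_r$; and each $\Sigma_j$ carries an ergodic $f$-invariant Borel probability measure $\mu_j$, necessarily non-atomic (its atoms would be periodic points of $f$) and fully supported on $\Sigma_j$ (by minimality).

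Next comes the collapsing. Put $\mu=\frac1r\sum_{j=1}^r\mu_j$, an $f$-invariant non-atomic probability measure; following \cite{BP}, this is the invariant measure I would exploit. Let $K$ be a closed $f$-invariant set, built from $\mathrm{supp}(\mu)=\Sigma_1\cup\cdots\cup\Sigma_r$ by adjoining the needed backward iterates inside $\Lambda$, chosen so that $K$ contains $\omega(x)$ for every non-ultimately-periodic $x$, $\mathrm{supp}(\mu)=K$, no partition endpoint $c_i$ lies in the interior of a complementary interval (gap) of $K$, and $f$ maps each gap of $K$ into a gap of $K$. Define $h\colon I\to[0,1)$ by $h(x)=\mu\big([0,x)\big)$: it is continuous, non-decreasing, onto $[0,1)$, pushes $\mu$ forward to Lebesgue measure, and identifies exactly those pairs of points whose separating interval misses $K$. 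Because $\mu$ is $f$-invariant and $f$ is monotone on each $I_i$, the equation $h\circ f=T\circ h$ determines a well-defined map $T\colon[0,1)\to[0,1)$; on each of the at most $n$ intervals delimited by $h(c_1)\le\cdots\le h(c_{n-1})$, $T$ is the translation or the flip that $f$ induces there. Since $T_*\mathrm{Leb}=T_*h_*\mu=h_*f_*\mu=h_*\mu=\mathrm{Leb}$, the map $T$ is a bijection, hence a genuine $m$-IET with $m\le n$; it has $m\ge 2$ because an $m=1$ IET is the identity, which would trap every $f$-orbit in $K$ inside a gap and make it ultimately periodic; and since an IET is piecewise isometric it has no attracting periodic orbit, so $T$ meets the requirements of the statement.

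Finally, the matching. Fix $x$ with $\theta_f(x)$ not ultimately periodic, so $\omega(x)=\Sigma_j$. After finitely many iterates the orbit of $x$ is confined to a neighbourhood of $\Sigma_j$ that avoids the finitely many points of $K$ which $h$ sends onto some $h(c_i)$; hence there is $k\ge 0$ with $f^{k+\ell}(x)\in I_i$ if and only if $h\big(f^{k+\ell}(x)\big)$ lies in the $i$-th interval of the partition defining $T$, for every $\ell\ge 0$. Thus $\theta_f(f^k(x))$ coincides with the natural $T$-coding $\theta_T\big(h(f^k(x))\big)$ after relabelling the alphabet by the bijection that $h$ induces between the intervals of $f$ and of $T$ actually visited; so it is isomorphic to a natural coding of $T$, and it is not ultimately periodic, since otherwise composing the $\lambda$-contractions along the periodic block would force $f^k(x)$, hence $x$, to be asymptotic to a periodic orbit, contradicting $\omega(x)=\Sigma_j$. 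Together with the tautological case this proves the theorem.

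I expect the main obstacle to be the honest construction of the collapsing semiconjugacy $h$: choosing an invariant measure that is non-atomic and sees precisely the part of the attractor responsible for non-periodic codings (whence the symmetric average over the $\Sigma_j$), pinning down the closed invariant set $K$ whose gaps are permuted by $f$ so that $T$ is genuinely well defined and an interval exchange rather than a mere piecewise map, and bookkeeping the partition endpoints that fall inside gaps. The structural inputs — that non-ultimately-periodic codings correspond exactly to minimal Cantor $\omega$-limit sets, that these are finite in number, and that each carries a non-atomic ergodic invariant measure — are where the specific geometry of injective piecewise contractions, rather than soft topological dynamics, is essential.
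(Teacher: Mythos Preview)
Your overall strategy is exactly the paper's: average the non-atomic $f$-invariant measures carried by the infinite $\omega$-limit sets, set $h(t)=\mu([0,t])$, push $f$ forward to an $m$-IET $T$ via $T\circ h=h\circ f$, and then read off the codings. The paper does not introduce your auxiliary set $K$; it works directly with $A=\mathrm{supp}(\mu)=\Lambda_1\cup\cdots\cup\Lambda_s$ and checks (equation~\eqref{1840}) that $h$ conjugates $f\vert_{I_i}$ to a translation (or flip) on $h(I_i)$, so $T$ is a genuine $m$-IET with $m=\#\{i:(x_{i-1},x_i)\cap A\neq\emptyset\}$. Your appeal to ``structure theory'' for the invariant measures is made precise in the paper as Theorem~\ref{proa} (a variant of \cite[Theorem~2.1]{BP}), and the claim that $T$ has no attractive periodic orbit is proved not by the bare fact that $T$ is piecewise isometric---which is too quick as stated---but by observing that the $h$-images of $f$-orbits dense in each infinite $\Lambda_j$ are infinite $T$-orbits whose union is dense in $I$.

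The one place where your sketch glosses over a genuine difficulty is the matching step. You assert that after finitely many iterates, $f^{k+\ell}(x)\in I_i$ iff $h(f^{k+\ell}(x))$ lies in the corresponding $T$-interval. The danger is this: if some partition endpoint $x_i\in\omega(x)$, the orbit of $x$ can visit the open interval $(x_{i-1},x_i)$ infinitely often while $\omega(x)\cap(x_{i-1},x_i)=\emptyset$; then $h$ collapses $I_i$ to a point and the two codings disagree along an infinite set of indices. Saying the orbit is ``confined to a neighbourhood of $\Sigma_j$'' does not rule this out, because that neighbourhood may straddle $x_i$. The paper closes this gap with a dedicated lemma (Corollary~\ref{cor35}, resting on Lemmas~\ref{lematwo} and \ref{lemathree}): if $\omega(x)$ is infinite and the orbit visits $(x_{i-1},x_i)$ infinitely often, then $\omega(x)\cap(x_{i-1},x_i)\neq\emptyset$. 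This is exactly what guarantees that eventually the orbit visits only the intervals $I_i$ with $i\in\mathcal{I}$, after which the coding isomorphism is immediate. Add this lemma and your argument coincides with the paper's.
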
 

\begin{theorem}\label{thm2} Given any topologically transitive $n$-IET $T{:}\,I\to I$, there exist an injective  piecewise $\frac12$-affine contraction $f_T{:}\,I\to I$ of $n$ intervals
and a continuous, surjective, nondecreasing map $h{:}\,I\to I$ such that $\theta_{f_T}(x)=\theta_T\big(h(x)\big)$ for every $x\in I$. In particular, if $T$ is an irreducible standard $n$-IET satisfying the i.d.o.c., then
 the language of each natural coding of $f_T$ equals the language of $T$. 
\end{theorem}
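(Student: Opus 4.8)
The plan is to realize $f_T$ together with the semiconjugacy $h$ by \emph{blowing up} a dense orbit of $T$, in a way tailored to produce slopes of modulus exactly $\tfrac12$. Since $T$ is topologically transitive, fix $x_0\in I$ whose forward orbit $\{b_k:=T^k(x_0):k\ge 0\}$ is dense; as $T$ acts on the infinite set $I$, this orbit is infinite, so the $b_k$ are pairwise distinct. Put $\ell_k:=2^{-k-1}$, so that $\sum_{k\ge 0}\ell_k=1$ and, crucially, $\ell_{k+1}=\tfrac12\ell_k$. Let $\phi{:}\,I\to[0,1]$ be the strictly increasing jump function with a jump of size $\ell_k$ at each $b_k$, say $\phi(w)=\sum_{k\,:\,b_k<w}\ell_k$; density of $\{b_k\}$ makes $\phi$ strictly increasing, hence injective, so its range $K:=\phi(I)$ is Lebesgue-null and $I$ is partitioned as $K\sqcup\bigsqcup_k P_k$, where $P_k:=(\phi(b_k),\phi(b_k)+\ell_k]$ is the gap created by the $k$-th jump, $|P_k|=\ell_k$. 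Define $h{:}\,I\to I$ by $h|_{P_k}\equiv b_k$ and $h|_K:=\phi^{-1}$. Then $h$ is nondecreasing and onto $I$, hence continuous (a monotone surjection onto an interval has no jumps).

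Next I would produce $f_T$ as a genuine piecewise $\tfrac12$-affine map. Writing $I_i=[a_i,a_{i+1})$ with $0=a_1<\dots<a_{n+1}=1$ and $T|_{I_i}(x)=\sigma_i x+b_i$, the key computation is that, with the geometric weights above, for each $i$ there is a constant $\beta_i$ with
\[
\phi\big(T(w)\big)=\sigma_i\tfrac12\,\phi(w)+\beta_i\qquad(w\in I_i).
\]
Indeed, for $w\in I_i$ the orbit points lying below $T(w)$ split into those in $[0,\inf T(I_i))$ --- a set independent of $w$, giving a constant contribution --- and those in $T(I_i)$; since $T^{-1}(T(I_i))=I_i$, every orbit point in $T(I_i)$ is of the form $T(b_j)=b_{j+1}$ with $b_j\in I_i$, it lies below $T(w)$ exactly when $b_j$ lies below $w$ (for $\sigma_i=1$; symmetrically for $\sigma_i=-1$), and it contributes $\ell_{j+1}=\tfrac12\ell_j$. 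Summing yields the displayed identity. Now set $\hat I_i:=\big[\phi(a_i),\phi(a_{i+1})\big)$ (adjusting endpoints if some $a_i$ happens to lie on the orbit); these are nondegenerate, partition $I$, and satisfy $h^{-1}(I_i)=\hat I_i$. Define $f_T$ on $\hat I_i$ by $f_T(t)=\sigma_i\tfrac12 t+\beta_i$. By the displayed identity, $f_T$ maps $\hat I_i$ onto the convex hull of $\phi(T(I_i))$; since $\{T(I_i)\}$ partitions $I$ and $\phi$ is monotone, these hulls have pairwise disjoint interiors and exhaust $I$. Hence $f_T(I)\subseteq I$ and $f_T$ is an injective piecewise $\tfrac12$-affine contraction of $n$ intervals.

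It remains to verify $h\circ f_T=T\circ h$. On $K$ this is immediate: $h(f_T(\phi(w)))=h(\phi(T(w)))=T(w)=T(h(\phi(w)))$ by the displayed identity. On a gap $P_k$ with $b_k\in I_i$, applying the displayed identity in the one-sided limits $w\to b_k^{\pm}$ from within $I_i$ shows that $f_T$ carries $P_k$ affinely onto the gap $P_{k+1}$ attached to $b_{k+1}=T(b_k)$ (here $\tfrac12\ell_k=\ell_{k+1}$ is what makes the lengths match), whence $h(f_T(P_k))=\{b_{k+1}\}=T(h(P_k))$. From $h\circ f_T=T\circ h$ and $h^{-1}(I_i)=\hat I_i$ one gets, for every $m\ge 0$ and every $i$, that $f_T^m(x)\in\hat I_i\iff h(f_T^m(x))\in I_i\iff T^m(h(x))\in I_i$, i.e.\ $\theta_{f_T}(x)=\theta_T\big(h(x)\big)$ for all $x\in I$. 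For the last assertion: if $T$ is an irreducible standard $n$-IET satisfying the i.d.o.c., then $T$ is minimal by Keane's theorem, so all natural $T$-codings share the language $\mathcal L$ of $T$; since $\theta_{f_T}(x)=\theta_T(h(x))$ is a natural $T$-coding for every $x$, the language of each natural coding of $f_T$ equals $\mathcal L$.

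I expect the main technical obstacle to be the middle step: establishing the functional identity with a \emph{constant} $\beta_i$ --- equivalently, that the single affine map $t\mapsto\sigma_i\tfrac12 t+\beta_i$ transports \emph{all} the gaps $P_k$ with $b_k\in I_i$ to the correct targets --- together with the attendant bookkeeping (half-open versus closed endpoints so that $h^{-1}(I_i)$ is exactly $\hat I_i$ and $f_T$ is genuinely injective, the orientation-reversing case $\sigma_i=-1$, and the possibility that some $a_i$ lies on the chosen orbit). That identity is precisely where the interval-exchange structure $T^{-1}(T(I_i))=I_i$ and the geometric choice $\ell_{k+1}=\tfrac12\ell_k$ are used in an essential way.
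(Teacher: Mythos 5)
Your construction mirrors the paper's almost exactly: blow up a dense $T$-orbit into gaps of geometrically decaying lengths $2^{-k-1}$, let $h$ be the nondecreasing collapsing map, and read off the piecewise $\tfrac12$-affine contraction from the conjugated dynamics on the gaps. So the structural outline is the paper's. But the step you yourself flagged as the main obstacle --- establishing $\phi(T(w))=\sigma_i\tfrac12\phi(w)+\beta_i$ with $\beta_i$ \emph{constant} on $I_i$ --- does in general fail for exactly one index $i$. Your bookkeeping asserts that every orbit point lying in $T(I_i)$ is of the form $T(b_j)=b_{j+1}$ with $b_j\in I_i$; this is false for $b_0$, which has no predecessor in the forward orbit (else the orbit would be eventually periodic, contradicting density). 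Since $T(I_1),\dots,T(I_n)$ partition $I$, there is a unique $i_0$ with $b_0\in T(I_{i_0})$, and when $b_0$ lies in the \emph{interior} of $T(I_{i_0})$ (the generic case) the unmatched weight $\ell_0=\tfrac12$ is counted in $\phi(T(w))$ only for those $w\in I_{i_0}$ with $T(w)$ on one side of $b_0$. Concretely, $w\mapsto\phi(T(w))-\sigma_{i_0}\tfrac12\phi(w)$ has a jump of size $\tfrac12$ at $w=T^{-1}(b_0)\in I_{i_0}$, so your $f_T$ is actually a piecewise $\tfrac12$-affine map with $n+1$ branches, discontinuous inside $\hat I_{i_0}$ --- one can see this already for the rotation $T(x)=x+\alpha\ (\mathrm{mod}\ 1)$ with, say, $b_0=1/2$. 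This is not a cosmetic issue, since both the statement of Theorem 2 and Corollary 1(ii) depend on $f_T$ having exactly $n$ intervals.

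The repair is to choose the starting point of the dense orbit to be a one-sided image $T(a_i^{\pm})$ of a partition endpoint, so that $b_0$ sits on the common boundary of two of the intervals $T(I_i)$ and never in an interior; then the extra term is monotone--constant on each $I_i$ and the identity holds as stated. This is exactly what the paper does silently in the proof of Corollary 2, where $p_1=\alpha=T(0^+)$, and it is the choice you must make explicit here. You should also state that the chosen orbit $\{b_k\}$ avoids the partition endpoints $\{a_1,\dots,a_{n-1}\}$, which is needed both for the codings to be defined and for $h^{-1}(I_i)=\hat I_i$. A final smaller imprecision: when $\sigma_i=-1$ the image $f_T(P_k)$ is $[\phi(b_{k+1}),\phi(b_{k+1})+\ell_{k+1})$ rather than $P_{k+1}=(\phi(b_{k+1}),\phi(b_{k+1})+\ell_{k+1}]$ --- right-attached gaps get flipped to left-attached ones --- so ``$f_T$ carries $P_k$ onto $P_{k+1}$'' is not literally true; $h$ still collapses both to $b_{k+1}$, so the semiconjugacy survives, but you should say so. The paper avoids this by mapping closed blocks $G_k$ bijectively onto $G_{k+1}$ regardless of orientation, which you may prefer to imitate.
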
 

In Theorem \ref{thm1}, the term ``without attractive periodic orbits" may be replaced by ``aperiodic" in the case in which $I_i=\big[x_{i-1},x_i\big)$ and $f\vert_{I_i}$, $1\le i\le n$, is (strictly) increasing, where $I_1,\ldots,I_n$ is the partition associated to $f$. 

Theorem \ref{thm1} turns out to be a dictionary between languages of PCs and languages of IETs. Languages of minimal  IETs were studied by Belov and Chernyat'ev \cite{ABALC},  Ferenczi \cite{SF1}, Ferenczi and Zamboni \cite{FZ}, and Dolce and Perrin \cite{DP}. In particular, it is known that if $\theta$ is a natural coding of an irreducible standard minimal $n$-IET, then the language $\mathcal{L}(\theta)$ 
 does not depend on $\theta$, is uniformly recurrent and has complexity function satisfying $p_{\theta}(k)\le (n-1)k+1$, where the equality holds  if the IET satisfies the i.d.o.c.. Languages generated by substitutions (e.g. languages of self-similar IETs) were studied by Lopez and Narbel \cite{LN}. 
 Natural codings of aperiodic $n$-IETs are isomorphic to natural codings of topologically transitive $n$-IETs.
 
 Theorem \ref{thm2} provides examples of $n$-PCs without periodic or ghost orbits, or equivalently, without ultimately periodic natural codings. These examples are not easy to construct because generically $n$-PCs of the interval are asymptotically periodic (see \cite{NPR1,NPR2}). The following result is a corollary of Theorem \ref{thm1}.  
 
\begin{corollary}\label{cor0} Let $\theta$ be a natural coding of an injective $n$-PC, then some infinite subword of $\theta$
  is either periodic or isomorphic to a non ultimately periodic natural coding of a topologically transitive $m$-IET, where $2\le m\le n$.
\end{corollary}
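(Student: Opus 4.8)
The plan is to deduce the corollary from Theorem \ref{thm1} together with a reduction of interval exchange transformations without attractive periodic orbits to topologically transitive ones. Fix $x\in I$ with $\theta=\theta_f(x)$. By Theorem \ref{thm1} there are an integer $k\ge 0$, an integer $m$ with $2\le m\le n$, and an $m$-IET $T{:}\,I\to I$ without attractive periodic orbits such that $\theta_f\big(f^k(x)\big)$ is either periodic or isomorphic to a non ultimately periodic natural coding of $T$. Since $\theta_f\big(f^k(x)\big)=\theta_k\theta_{k+1}\theta_{k+2}\cdots$, this word is an infinite subword of $\theta$. If it is periodic, we are done; so assume it is isomorphic to a non ultimately periodic natural coding $\omega=\theta_T(y)$ of $T$ for some $y\in I$. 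Because isomorphism of infinite words is an equivalence relation, it now suffices to prove that $\omega$ is isomorphic to a non ultimately periodic natural coding of a topologically transitive $m'$-IET with $2\le m'\le n$.

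To this end, let $\mathrm{Per}(T)$ be the open, $T$-invariant set of periodic points of $T$. Any point whose forward orbit eventually meets $\mathrm{Per}(T)$ has an ultimately periodic natural coding, since once the orbit reaches a periodic point it is periodic thereafter. As $\omega$ is not ultimately periodic, the whole forward orbit of $y$ lies in the closed, $T$-invariant set $A=I\setminus\mathrm{Per}(T)$. By the structure theory of interval exchange transformations — classical when $T$ has no flips, and resting on the non-orientable recurrence theory of \cite{N} in the presence of flips — the set $A$ is a finite union of $T$-invariant half-open subintervals on each of which $T$ restricts to a minimal (hence topologically transitive) IET. Let $J$ be the one that contains $y$. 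Rescaling $J$ affinely onto $I$, which does not change any natural coding, and equipping it with the partition formed by the nonempty traces $I_i\cap J$, we obtain a topologically transitive $m'$-IET $T'{:}\,I\to I$ with $m'\le m\le n$, and $m'\ge 2$ because a $1$-IET is the identity, which is not topologically transitive on $I$. Since the orbit of $y$ stays in $J$, the word $\omega=\theta_T(y)$ is literally a natural coding $\theta_{T'}(y)$ of $T'$, and it is not ultimately periodic. This proves the claim, and with it the corollary.

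The step that requires the most care is the appeal to the structure theorem: one must know that deleting the periodic points leaves a finite union of \emph{invariant subintervals} carrying minimal IETs and — crucially for the bound $m'\le n$ — that such a component is a genuine subinterval, so that the partition induced on it by $I_1,\dots,I_m$ has at most $m$ atoms. For standard IETs this is classical (and in that case topological transitivity coincides with minimality); the delicate case is IETs with flips, handled via \cite{N}. Alternatively, when the partition of $f$ consists of left-closed right-open intervals on which $f$ is increasing, $T$ may be taken aperiodic, as noted after Theorem \ref{thm1}, and the passage to a topologically transitive IET is then exactly the observation recorded there; this disposes of the orientation-preserving case without invoking the structure theory for flipped IETs.
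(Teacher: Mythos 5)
Your overall strategy is correct and matches the paper's opening move: invoke Theorem~\ref{thm1} to get an $m$-IET $T$ without attractive periodic orbits, observe that $\theta_f\big(f^k(x)\big)$ is an infinite subword of $\theta$, and then replace $T$ by a topologically transitive IET by passing to the minimal component carrying the orbit of $y$. The gap is exactly in the step you flag as delicate: you assert that $I\setminus\mathrm{Per}(T)$ decomposes as a finite union of \emph{$T$-invariant subintervals} on each of which $T$ is minimal, so that the relevant component $J$ is a single interval that can be affinely rescaled onto $I$. That is not true, even for orientation-preserving (standard) IETs: a minimal component of an IET need not be connected. Take $\beta,\gamma\in(0,1/4)$ with $2\beta,2\gamma$ irrational, and let $T$ be the $6$-IET on $[0,1)$ that acts on $[1/4,3/4)$ as a circle rotation by $\gamma$ (circumference $1/2$), and on $A=[0,1/4)\cup[3/4,1)$ as a circle rotation by $\beta$ of the circle of circumference $1/2$ obtained by identifying the two pieces end to end; concretely on $A$,
$$T(x)=\begin{cases}x+\beta, & x\in[0,1/4-\beta)\cup[3/4,1-\beta),\\ x+\beta+\tfrac12, & x\in[1/4-\beta,1/4),\\ x+\beta-1, & x\in[1-\beta,1).\end{cases}$$
This $T$ is aperiodic, hence has no attractive periodic orbits, and $A$ is a $T$-invariant set on which $T$ is minimal; but $A$ is disconnected and neither $[0,1/4)$ nor $[3/4,1)$ is separately invariant. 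If the forward orbit of $y$ lies in $A$, there is no single subinterval to rescale, and your construction of $T'$ does not go through. Your fallback remark about the increasing/aperiodic case does not help either, since the example above is already orientation-preserving and aperiodic.

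What the paper does instead, and what your argument is missing, is a genuine semiconjugacy rather than a rescaling. It takes the (possibly disconnected) minimal component $A_1\cup\cdots\cup A_p$ containing the regular tail $O_T(y^*)$ of the orbit, puts the normalized Lebesgue measure $\mu$ on it, and sets $h(t)=\mu\big([0,t]\big)$. This $h$ is continuous, nondecreasing, collapses the gaps between the $A_i$, and the $T$-invariance of $\mu$ gives $h\big(T(y')\big)-h\big(T(y)\big)=h(y')-h(y)$ on each continuity interval. One then defines $E\big(h(y)\big)=h\big(T(y)\big)$ and checks this is a well-defined, injective, surjective $m'$-IET which is topologically transitive because $O_T(y^*)$ is dense in $A_1\cup\cdots\cup A_p$; the letters are preserved, so the natural $T$-coding of $y^*$ is isomorphic to the natural $E$-coding of $h(y^*)$, and $m'\le m\le n$ because $m'$ counts the original $I_i$ meeting the minimal component. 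You would need to replace your rescaling step by this collapse argument (or an equivalent one) to close the gap.
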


Theorems \ref{thm1} and \ref{thm2} imply, in particular, the result of Catsigeras, Guiraud and Meyroneinc \cite{CGM} concerning the complexity function of languages of $n$-PCs, which is stated below in a more complete way, with $f_T$ given by Theorem $\ref{thm2}$.
\begin{corollary}\label{cor1} 
Let $\theta$ be a natural coding of an injective $n$-PC $f{:}\,I\to I$, then 
\begin{itemize}
\item [$(i)$]
There exist $\alpha\in \{0,1,\ldots,n-1\}$, $\beta\ge 1$ and $k_0\ge 1$ such that the complexity function of $\theta$ satisfies $p_{\theta}(k)=\alpha k + \beta$ for every $k\ge k_0$ with $\beta=1$ if $\alpha=n-1$;
\item [$(ii)$] If $n\ge 2$, $T{:}\,I\to I$ is a standard $n$-IET satisfying the i.d.o.c. and $f=f_T$, then $p_{\theta}(k)=(n-1)k+1$ for every $k\ge 1$.
\end{itemize}
\end{corollary}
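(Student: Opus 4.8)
The plan is to derive part (i) from an elementary counting bound together with the dictionary of Theorem~\ref{thm1}, and to obtain part (ii) directly from Theorem~\ref{thm2} and Keane's theorem.

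\smallskip

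The first step is an a priori upper bound: $p_\theta(k)\le (n-1)k+1$ for every $k\ge 1$ and every natural coding $\theta$ of an injective $n$-PC, proved directly from the dynamics. For a finite word $w=w_0\cdots w_{k-1}$ over $\mathcal A$ put $C_w=\bigcap_{i=0}^{k-1}f^{-i}(I_{w_i})$. Since each $f|_{I_j}$ is continuous and injective, an induction on $k$ shows that every nonempty $C_w$ is an interval and that $f^{k}|_{C_w}$ is continuous and injective; in particular the intervals $f^{k}(C_w)$, as $w$ runs over $\mathcal A^{k}$, are pairwise disjoint. Refining from level $k$ to level $k+1$, each nonempty $C_w$ is partitioned into the cylinders $C_w\cap f^{-k}(I_j)$, whose number exceeds $1$ by at most the number of partition points $x_1,\dots,x_{n-1}$ lying in $f^{k}(C_w)$; summing over $w$ and using disjointness of the $f^{k}(C_w)$, the total number of nonempty level-$k$ cylinders increases by at most $n-1$ at each step, hence is at most $(n-1)k+1$. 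As every length-$k$ factor of $\theta=\theta_f(x)$ is the length-$k$ prefix of $\theta_f(f^{m}(x))$ for some $m$, and this prefix labels a nonempty cylinder, we conclude $p_\theta(k)\le (n-1)k+1$.

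\smallskip

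The second step shows that $p_\theta$ is eventually affine with the stated parameters. Apply Theorem~\ref{thm1}: there is $k\ge 0$ such that $\sigma^{k}\theta$ (the word $\theta$ with the prefix $\theta_0\cdots\theta_{k-1}$ deleted) is periodic, or isomorphic to a non ultimately periodic natural coding $\vartheta$ of an $m$-IET without attractive periodic orbits, with $2\le m\le n$. If $\sigma^{k}\theta$ is periodic, its complexity is eventually constant, so $p_\theta$ (being nondecreasing and bounded above by that constant plus $k$) is eventually constant: $p_\theta(k)=\beta$ for large $k$, with $\alpha=0$ and $\beta\ge 1$. Otherwise, since isomorphism preserves the complexity function, $p_{\sigma^{k}\theta}=p_{\vartheta}$; reducing $\vartheta$ (as in the proof of Theorem~\ref{thm1} and Corollary~\ref{cor0}, and following \cite{ABALC,SF1,FZ,DP}) to a natural coding of a minimal standard $m'$-IET with $2\le m'\le m$, and invoking the Morse--Hedlund theorem since $\vartheta$ is not ultimately periodic, one gets $p_{\vartheta}(k)=\alpha k+\beta'$ for all large $k$, with $1\le\alpha\le m'-1\le n-1$, $\beta'\ge 1$, and $\alpha=m'-1$ precisely when the reduced IET satisfies the i.d.o.c., in which case $p_{\vartheta}(k)=\alpha k+1$ for all $k$. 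Finally, a bookkeeping argument shows that prepending the finite word $\theta_0\cdots\theta_{k-1}$ alters the complexity only by an additive constant for large $k$: for $j>k$ the extra length-$j$ factors of $\theta$ are the words obtained by following a suffix of $\theta_0\cdots\theta_{k-1}$ by a prefix of $\sigma^{k}\theta$, each of which either lies in $\mathcal L(\sigma^{k}\theta)$ for every $j$ or fails to for all large $j$ (as $\mathcal L(\sigma^k\theta)$ is closed under taking factors), and those that fail are pairwise distinct once $j$ is large because $\sigma^{k}\theta$ is not ultimately periodic. Hence $p_\theta(j)=p_{\sigma^{k}\theta}(j)+c$ for large $j$, so $p_\theta(j)=\alpha j+\beta$ with the same slope $\alpha$ and $\beta=\beta'+c\ge 1$.

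\smallskip

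Combining the two cases, $\alpha\in\{0,1,\dots,n-1\}$ and $\beta\ge 1$; and if $\alpha=n-1$, then $p_\theta(k)\le (n-1)k+1$ forces $\beta\le 1$, whence $\beta=1$. This proves (i). For (ii), assume as in Theorem~\ref{thm2} that $T$ is an irreducible standard $n$-IET satisfying the i.d.o.c. and $f=f_T$. By Keane's theorem \cite{MK} $T$ is minimal, and it is classical (the equality case of the bound recalled after Theorem~\ref{thm2}) that a natural coding of an i.d.o.c.\ standard $n$-IET has complexity exactly $(n-1)k+1$ for all $k\ge 1$. By the last assertion of Theorem~\ref{thm2}, the language of every natural coding $\theta$ of $f_T$ coincides with the language of $T$, hence $p_\theta(k)=(n-1)k+1$ for all $k\ge 1$.

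\smallskip

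I expect the main obstacle to be the IET-side input of the second step: one must package, for a possibly flipped and non-minimal $m$-IET without attractive periodic orbits, the reduction of its non ultimately periodic codings to minimal standard IET codings on at most $m$ intervals together with the precise description of the latter (eventually affine, integer slope in $\{1,\dots,m-1\}$, constant term $\ge 1$, maximal slope $\Leftrightarrow$ i.d.o.c.), and then verify that deleting a finite prefix of a non ultimately periodic word changes the complexity by an eventual additive constant without changing the slope. The cylinder-counting bound $p_\theta(k)\le (n-1)k+1$, by contrast, is elementary, and it is precisely what pins down the extremal implication $\alpha=n-1\Rightarrow\beta=1$.
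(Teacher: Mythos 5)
Your proof is correct and follows the same overall route as the paper: pass to the dictionary of Theorem~\ref{thm1}/Corollary~\ref{cor0}, compute the eventual affine complexity on the IET side, and control the effect of deleting a finite prefix (the paper's Lemma~\ref{sd}, which you rederive). The one genuine addition in your argument is the a~priori cylinder-counting bound $p_\theta(k)\le(n-1)k+1$, and it is a worthwhile one: the paper's proof of item $(i)$ ends by ``apply Lemma~\ref{sd},'' which gives $p_\theta(k)=p_{\theta^*}(k)+c$ for some $c\ge 0$ and large $k$, with $p_{\theta^*}(k)=\alpha k+\beta'$ coming from Lemma~\ref{lem47} (where $\alpha\le m-1\le n-1$ and $\beta'\ge 1$). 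That chain by itself does not rule out $c>0$, so it does not by itself pin down $\beta=1$ when $\alpha=n-1$; your direct bound (disjointness of $f^k(C_w)$ plus the fact that the $n-1$ break points split the level-$k$ cylinders into at most $n-1$ extra pieces at each refinement) is exactly what closes that gap, and it is elementary. In the other direction, your middle step is more impressionistic than the paper's: you wave at ``reducing $\vartheta$ to a natural coding of a minimal standard $m'$-IET, following \cite{ABALC,SF1,FZ,DP}'' and then invoke Morse--Hedlund, whereas the paper's Lemma~\ref{lem47} gives a self-contained partition-refinement computation $p_k(\theta^*)=1+\sum_{\ell<k}m_\ell$ with $\ell\mapsto m_\ell$ non-increasing, which is cleaner and does not need minimality or standardness; moreover your side remark that ``$\alpha=m'-1$ precisely when the reduced IET satisfies the i.d.o.c.'' is stronger than what is known or needed (the forward implication is what Lemma~\ref{lem47} uses, and the converse is irrelevant since your extremal conclusion comes from the a~priori bound anyway). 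Item $(ii)$ is handled the same way in both proofs, via Theorem~\ref{thm2} together with the exact i.d.o.c.\ complexity $(n-1)k+1$, which is the content of the ``moreover'' clause of Lemma~\ref{lem47}.
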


The particular family of $2$-PCs $f{:}\,I\to I$ defined by $f(x)=\lambda x+\delta\,\, (\textrm{mod}\, 1)$ was considered by Bugeaud \cite{B1,B2}, Bugeaud and Conze \cite{BC} and, more recently,  by Janson and \"Oberg \cite{JO}, and also by Laurent and Nogueira \cite{LNo}, by means of a rotation number approach. Concerning such family, we provide the following corollary, which turns out to be a special case of \cite[Corollary 7]{LNo}. We recall that an $n$-PC $f{:}I\to I$ is {\it topologically semiconjugate} to an $n$-IET $T{:}I\to I$ if there exists a
continuous, nondecreasing and surjective map $h{:}I\to I$ such that $h\circ f=T\circ h$.

\begin{corollary}\label{cor2} For each irrational $0<\alpha<1$, there exists a transcendent $\delta\in\mathbb{R}$ such that
the $2$-PC $f_T{:}\,I\to I$ and the minimal $2$-IET $T{:}\,I\to I$ defined by
$$
f_T(x)=\frac12 x +\delta\,\,({\rm mod}\,\,1)\quad\textrm{and}\quad T(x)= x +\alpha\,\,({\rm mod}\,\,1)
$$
are topologically semiconjugate and every natural coding of $f_T$ is a Sturmian sequence. In particular,
if $\alpha=2-\varphi$, where $\varphi=(1+\sqrt{5})/2$ is the golden ratio,
then $\delta=1-\frac{R}{2}$, where $R$ is the rabbit constant.
\end{corollary}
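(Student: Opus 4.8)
The plan is to deduce the semiconjugacy and the Sturmian property from the rotation number theory of contracted rotations, and then to exhibit an explicit base‑$2$ expansion of $\delta$ whose digit string is Sturmian, so that transcendence follows from a classical criterion.

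First, note that $T(x)=x+\alpha\pmod 1$ is an irreducible standard $2$-IET; its single discontinuity $1-\alpha$ has an infinite orbit (as $\alpha$ is irrational), so it satisfies the i.d.o.c.\ and is minimal by Keane \cite{MK}, and its natural codings $\theta_T(y)$ are exactly the Sturmian sequences of slope $\alpha$. By \cite[Corollary~7]{LNo} (see also \cite{B1,B2,BC,JO}) there is a (necessarily unique) $\delta\in(\tfrac12,1)$ such that the contracted rotation $F_\delta(x)=\tfrac12 x+\delta\pmod 1$ — an injective piecewise $\tfrac12$-affine contraction of the two intervals $I_1=[0,x_0)$, $I_2=[x_0,1)$ with $x_0=2(1-\delta)$ — has no periodic orbit, is topologically semiconjugate to $T$ through a continuous, nondecreasing, surjective map $h$ with $h\circ F_\delta=T\circ h$, and satisfies $\theta_{F_\delta}(x)=\theta_T(h(x))$ for all $x\in I$ (the single discontinuities of $F_\delta$ and of $T$ being carried onto one another by $h$). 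Setting $f_T:=F_\delta$, it follows that $f_T$ is topologically semiconjugate to $T$ and that every natural coding of $f_T$ is a natural coding of the irrational rotation $T$, hence a Sturmian sequence. (The existence of such an $f_T$ also follows from Theorem~\ref{thm2} applied to $T$, together with the remark after Theorem~\ref{thm1} — which shows the associated $2$-IET is aperiodic, hence an irrational rotation — and the classical Poincaré construction of the semiconjugacy for an aperiodic injective contraction of the circle.) It remains to prove that $\delta$ is transcendental and that $\delta=1-R/2$ when $\alpha=2-\varphi$.

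For transcendence, I would read off the binary expansion of $\delta$ from the backward dynamics of the discontinuity. Writing $F_\delta=\phi_i$ on $I_i$ with $\phi_1(t)=\tfrac12 t+\delta$ and $\phi_2(t)=\tfrac12 t+\delta-1$, each inverse branch $\phi_i^{-1}$ is affine with slope $2$, and $\phi_2(x_0)=0$. A Denjoy‑type analysis of $F_\delta$ (its invariant measure is non‑atomic, and the countably many complementary gaps of the attractor $\Lambda=\bigcap_{n\ge0}\overline{F_\delta^{\,n}(I)}$ are collapsed by $h$ onto the rotation orbit $\{m\alpha\pmod 1:m\ge1\}$, which avoids $0$) shows that $h^{-1}(0)=\{0\}$, so $0\in\Lambda$ and $0$ has a well‑defined infinite backward $F_\delta$-itinerary $\ldots b_{-2}b_{-1}$, with $F_\delta^{-j}(0)\in I_{b_{-j}}$; moreover $h$ carries this backward orbit onto the backward $T$-orbit of $0$, so $(b_{-j})_{j\ge1}$ coincides with the itinerary of $0$ under $R_\alpha^{-1}$, a Sturmian sequence of slope $\alpha$. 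Iterating $0=\phi_{b_{-1}}\circ\cdots\circ\phi_{b_{-N}}\bigl(F_\delta^{-N}(0)\bigr)$ and letting $N\to\infty$ (the $\phi_i$ being $\tfrac12$-contractions) yields
\[
\delta=\sum_{j\ge1}\varepsilon_j\,2^{-j},
\]
where $\varepsilon_j=\mathbf{1}[b_{-j}=2]$ and $(\varepsilon_j)_{j\ge1}$ is a Sturmian sequence, in particular not ultimately periodic. Hence $\delta$ is a real number whose base‑$2$ digit string is Sturmian, and by the transcendence criterion for real numbers with Sturmian (more generally, sublinear‑complexity) expansions in an integer base — Ferenczi and Mauduit, later sharpened by Adamczewski and Bugeaud — $\delta$ is transcendental. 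The main obstacle is precisely this middle step: turning the classical but delicate Denjoy picture of $F_\delta$ into the clean statements that $0\in\Lambda$ with $h^{-1}(0)=\{0\}$ and that its backward itinerary is a Sturmian word (if this base‑$2$ formula for $\delta$ is already available in the literature, even this becomes a citation and the whole result is essentially \cite{LNo} combined with the Ferenczi–Mauduit criterion).

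Finally, for $\alpha=2-\varphi$ the continued fraction expansion of $\alpha$ is eventually periodic, so $(\varepsilon_j)$ is (a shift of) the Fibonacci word $1\,0\,1\,1\,0\,1\,0\ldots$, the fixed point of the substitution $1\mapsto10,\ 0\mapsto1$; comparing the first few terms of $(\varepsilon_j)$ with the binary digits of $1-R/2$ and invoking the self‑similarity of the Fibonacci word — equivalently, renormalising $F_\delta$ once, which is legitimate because the renormalisation of a contracted rotation with eventually periodic rotation number is eventually periodic — identifies $(\varepsilon_j)$ with the binary digit string of $1-R/2$, whence $\delta=1-R/2$.
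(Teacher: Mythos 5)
Your overall architecture is right (semiconjugacy of the contracted rotation to the irrational rotation, Sturmian codings, a Sturmian binary expansion of $\delta$, transcendence via Ferenczi--Mauduit / Adamczewski--Cassaigne, Fibonacci word for $\alpha=2-\varphi$), and you are correct that the first two claims can be read off from \cite[Corollary 7]{LNo}. But you have flagged the gap yourself: the key step, extracting the base-$2$ digit string of $\delta$ from the backward $F_\delta$-itinerary of the discontinuity $0$, is not carried out. It requires the Denjoy-type statements that $0$ lies in the attractor $\Lambda$, that $h^{-1}(0)=\{0\}$ (i.e.\ $0$ is not the endpoint of a complementary gap collapsed by $h$), and that the backward itinerary is Sturmian; none of these follow from the mere existence of the semiconjugacy furnished by \cite{LNo}, and establishing them would amount to a separate, nontrivial argument. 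As it stands the proposal has a genuine hole precisely at the point where the explicit formula for $\delta$ is produced.

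The paper sidesteps all of this by never invoking \cite{LNo} and never analyzing backward dynamics. It runs the explicit construction of Theorem~\ref{thm2} on the forward dense $T$-orbit $\{p_k\}=\{T^{k}(\alpha)\}_{k\ge1}$: the dyadic intervals $G_k$ have $|G_k|=2^{-k}$, the set $\{m_k\}$ indexes those $\ell$ with $p_\ell\in J_1$, equivalently $\theta_{\ell-1}=1$ in the forward Sturmian coding $\theta$ of $\alpha$, and $\widehat{I_1}=\bigcup_k G_{m_k}$ is dense in $I_1=h^{-1}(J_1)$. Summing lengths gives $x_1=\sup I_1=\sum_{\ell\ge1}(2-\theta_{\ell-1})2^{-\ell}$, and the two boundary conditions $\tfrac12 x_1+b_1=1$, $\tfrac12 x_1+b_2=0$ force $f_T(x)=\tfrac12x+\delta\ (\mathrm{mod}\ 1)$ with $\delta=\tfrac14\sum_{\ell\ge0}\theta_\ell 2^{-\ell}$, hence $\delta=\tfrac14\bigl(1+\theta_0+\sum_{\ell\ge1}(\theta_\ell-1)2^{-\ell}\bigr)$, whose tail digit string $(\theta_\ell-1)_{\ell\ge1}$ is Sturmian by construction. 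So the Sturmian binary expansion comes for free from the \emph{forward} coding of $\alpha$, and both transcendence and the $\delta=1-R/2$ identity for $\alpha=2-\varphi$ are then direct arithmetic, with no renormalization argument needed. If you want to keep your route you must actually prove the Denjoy-type regularity of $h$ at $0$; the shorter path is to compute $\delta$ in closed form from Theorem~\ref{thm2} as the paper does.
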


In Corollary \ref{cor2}, we have that $\delta=\frac14\sum_{k\ge 0} \theta_{k} 2^{-k}$, where $\theta=\theta_0\theta_1\ldots$ is the natural coding of $\alpha$ under the action of the irrational rotation $x\mapsto x+\alpha\,\,({\rm mod}$\,\,1).

\section{Preparatory lemmas}

In this section, we present some results that will be used to prove Theorem \ref{thm1}, as the next table  clarifies.

\begin{table}[h]
\begin{tabular}{  l |  l }
  \hline			
  result & needs \\ \hline
   Lemma \ref{lemathree} & Lemma \ref{lematwo} \\
   Corollary \ref{cor35} & Lemma \ref{lemathree} \\
   Theorem \ref{proa} & Lemma \ref{lemafour} and \cite[Theorem 2.1]{BP} \\
   Theorem \ref{thm1} & Lemma \ref{lemaone}, Corollary \ref{cor35} and Theorem \ref{proa}   \\
  \hline  
\end{tabular}
\end{table}

Throughout this section, let $I=[0,1)$ and  $f{:}\,I\to I$ be an injective $n$-PC with associated partition $I_1,\ldots,I_n$ whose endpoints are
$0=x_0<x_1<\cdots< x_n=1$. 


The {\it $\omega$-limit set} of $x\in I$ is defined by
$$\omega(x)=\bigcap_{\ell\ge 1}\overline{\bigcup_{k\ge \ell} \left\{ f^k(x)\right\}}\,,$$
where $\overline{S}$ denotes the topological closure in $\mathbb{R}$ of any set $S\subset I$.

\begin{lemma}\label{lemaone} Let $x\in I$ be such that $\omega(x)$ is finite, then there exists an integer $k\ge 0$ such that
the natural $f$-coding of $f^k(x)$ is periodic.
\end{lemma}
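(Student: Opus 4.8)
The plan is to show that a finite $\omega$-limit set under an injective piecewise contraction must be a periodic orbit lying in the interiors of the partition pieces, and then deduce that the coding eventually settles into the periodic word read off that orbit. First I would record the standard fact that $\omega(x)$ is always $f$-invariant in the forward sense; the subtlety is that $f$ need not be continuous, so I cannot immediately assert $f(\omega(x)) = \omega(x)$. However, since $\omega(x) = \{y_1, \ldots, y_p\}$ is finite, the sequence $f^k(x)$ visits each $y_j$ infinitely often, and for every $k$ large enough $f^k(x)$ lies in a small neighborhood of $\omega(x)$; picking a subsequence along which $f^k(x) \to y_j$ and examining $f^{k+1}(x)$, I would argue that the values $f^{k+1}(x)$ accumulate only on $\omega(x)$, so $f^{k+1}(x)$ is eventually one of finitely many points. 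Combined with injectivity of $f$, this forces the tail of the orbit of $x$ to be exactly a cycle $y_1 \mapsto y_2 \mapsto \cdots \mapsto y_p \mapsto y_1$.

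Next I would pin down that there is some $k_0$ with $f^{k}(x) \in \omega(x)$ for all $k \ge k_0$. The point is that $\{f^k(x) : k \ge k_0\}$ has closure equal to $\omega(x)$ (a finite set), so this closed set of orbit points is contained in $\omega(x)$; conversely it is all of $\omega(x)$ by definition of $\omega$-limit. So after time $k_0$ the orbit of $x$ is contained in the finite invariant set $\omega(x)$, on which $f$ acts as a permutation by injectivity, and hence cyclically after possibly increasing $k_0$ once more so that $f^{k_0}(x)$ lies on the unique cycle that the orbit tail enters.

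Finally, with $y := f^{k_0}(x)$ periodic of some period $p$, I would read off the coding. For each $0 \le r < p$, the point $f^r(y)$ lies in exactly one partition interval $I_{i_r}$ (the intervals $I_1, \ldots, I_n$ form a partition of $I$, so membership is well-defined and unambiguous), and therefore the natural $f$-coding of $y = f^{k_0}(x)$ is the periodic word $(i_0 i_1 \cdots i_{p-1})(i_0 i_1 \cdots i_{p-1})\cdots$, which is precisely the assertion with $k = k_0$.

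The main obstacle I anticipate is the first step: because $f$ is only piecewise Lipschitz and may be badly discontinuous at the partition endpoints, one cannot invoke continuity to transport limits through $f$. The argument has to be made purely combinatorially from the finiteness of $\omega(x)$ — namely, that once the orbit is confined to the finite set $\omega(x)$, injectivity upgrades "eventually in a finite invariant set" to "eventually periodic" — rather than from any topological dynamics of $f$ itself. I would expect Lemma 1.3 (not shown in this excerpt, but referenced) to supply exactly the technical input that the orbit does become confined to, or shadow, the finite set $\omega(x)$, so that the rest is the clean permutation-of-a-finite-set argument sketched above.
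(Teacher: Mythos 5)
Your proposal has a genuine gap at its central step. You claim there exists $k_0$ with $f^k(x) \in \omega(x)$ for all $k \ge k_0$, arguing that the closure of the orbit tail equals $\omega(x)$ and hence the orbit tail is contained in $\omega(x)$. But the closure of $\{f^k(x): k\ge k_0\}$ is $\{f^k(x): k\ge k_0\}\cup\omega(x)$, which equals $\omega(x)$ only if the orbit tail is already inside $\omega(x)$ — precisely what you are trying to prove. In fact the claim is false for piecewise contractions: a contracting orbit typically converges to its $\omega$-limit set without ever landing on it. For instance, take $f(x)=x/2$ as one branch; the orbit $1/2, 1/4, 1/8, \ldots$ has $\omega$-limit $\{0\}$ but never hits $0$. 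So the finite set $\omega(x)$ need not be a genuine periodic orbit that the trajectory eventually joins; it can be a ``ghost'' orbit that the trajectory only shadows. Your earlier sketch (extracting subsequences and invoking injectivity to ``force the tail to be a cycle'') also founders on the same point — injectivity of $f$ on the finite set $\omega(x)$ says nothing about where the actual orbit points $f^k(x)$, which generally lie strictly outside $\omega(x)$, go.

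The paper's proof deals with exactly this obstruction. Rather than trying to place the orbit on $\omega(x)$, it surrounds each $p_j\in\omega(x)$ by two one-sided punctured intervals $(p_j-\epsilon,p_j)$ and $(p_j,p_j+\epsilon)$ chosen small enough to miss the partition endpoints, keeps only those one-sided intervals visited infinitely often, and shows that $f$ sends each such $J_1$ into another such $J_2$ (because $f|_{J_1}$ is a contraction, $f(J_1)$ is an open interval of length $<\epsilon$ that still attracts infinitely many orbit points and has an endpoint in $\omega(x)$). Pigeonhole on this finite collection then gives a cycle $J_{i_1}\to\cdots\to J_{i_2}=J_{i_1}$, and since each one-sided interval sits inside a single partition piece, the coding of $f^k(x)$ becomes periodic for suitable $k$. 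The orbit itself never needs to become periodic. This ``shadowing through one-sided neighborhoods'' device is the missing idea in your proposal.
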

\begin{proof}  We may assume that $\{x,f(x),f^2(x),\ldots\}$ is an infinite set, otherwise 
 $x$ would be a periodic point, then we could take $k=0$. Since $\omega(x)$ is a finite set, we may write $\omega(x)=\{p_1,\ldots,p_r\}$. Without loss of generality, we suppose that
$\omega(x)\subset (0,1)$, thus there exists $\epsilon>0$ so small that
$$ \epsilon<\frac14 \min_{1\le i<j\le r} \vert p_i-p_j\vert\,\,\textrm{and}\,\,  
\bigcup_{j=1}^r (p_j-\epsilon,p_j)\cup (p_j,p_j+\epsilon)\subset I{\setminus} \{x_0,x_1,\ldots,x_{n-1}\}.$$
In particular, if
$$ \mathscr{I}=\left\{(p_1-\epsilon, p_1),(p_1,p_1+\epsilon),\ldots,(p_r-\epsilon, p_r),(p_r,p_r+\epsilon)\right\},
$$
then $f(J)$ is an open interval for every $J\in\mathscr{I}$. 

Let $\mathscr{I}'\subset \mathscr{I}$ denote the subcollection formed by the intervals that are visited infinitely many times by the $f$-orbit of $x$, that is,
$$ \mathscr{I}'=\left\{ J\in\mathscr{I}: \{x,f(x),f^2(x),\ldots\}\cap J \,\,\textrm{is an infinite set}\right\}.
$$
We claim that for each $J_1\in \mathscr{I}'$, there exists ${J_2}\in \mathscr{I}'$ such that
$f(J_1)\subset J_2$. Without loss of generality, suppose that $J_1=(p-\epsilon,p)$, where $p\in \omega(x)$. As $J_1\subset I{\setminus}\{x_0,x_1,\ldots,x_{n-1}\}$, we have that $f\vert_{J_1}$ is a contraction, thus $f(J_1)$ is an open interval of length smaller than
$\epsilon$. On the other hand, since $J_1\in\mathscr{I}'$, there exists an increasing sequence of integers $0\le k_1<k_2<\cdots$ such that $\left\{f^{k_j}(x)\right\}_{j\ge 1}\subset J$. Notice that
 $\lim_{j\to\infty} f^{k_j}(x)=p$, otherwise there would exist a point of $\omega(x)$ in $J_1$ different from $p$, which contradicts the first inequality in the definition of $\epsilon$. Because $f\vert_{J_1}$ is injective and continuous, we have that $\left\{f^{k_j+1}(x)\right\}_{j\ge 1}\subset f(J_1)$ converges to some point  $q\in\omega(x)\cap\partial f(J_1)$, where $\partial f(J_1)$ denotes the endpoints of the open interval $f(J_1)$. Putting it all together, we conclude that $f(J_1)$ is an open interval
 that contains infinitely many points of the $f$-orbit of $x$, has length smaller than $\epsilon$, and has an endpoint in $\omega(x)$. Therefore, there exists $J_2\in\mathscr{I}'$ such that $f(J_1)\subset J_2$.

To finish the proof, let $J\in\mathscr{I}'$, then there exists $k'\ge 0$ such that $f^{k'}(x)\in J$. By the claim,
there exist $1\le i_1<i_2$ and intervals $J_1,\ldots,J_{i_1},J_{i_1+1},\ldots,J_{i_2}\in\mathscr{I'}$ such that $J_1$=J, $J_{i_1}=J_{i_2}$ and $f(J_{i})\subset J_{i+1}$ for all $1\le i\le i_2-1$, proving that $f^k(x)$ has a periodic natural $f$-coding for some $k\ge k'$.
\end{proof}

\begin{lemma}\label{lematwo} Let $J\subset I$ be an open interval, then there exists a finite set $B\subset I$ such that if $J_0\subset J{\setminus}B$ is an open interval, then one of the following happens:
\begin{itemize}
\item [$(i)$] $f(J_0)$, $f^2(J_0)$, \ldots are pairwise disjoint open intervals contained in $I{\setminus} J$;
\item [$(ii)$] $\exists m\ge 0$ such that $f^{m+1}(J_0)$ is open subinterval of $J$. Moreover, 
  if $m\ge 1$, then $f(J_0),\ldots,f^m(J_0)$ are open subintervals of $I{\setminus} \big(J\cup \{x_0,x_1,\ldots,x_{n-1}\})$. 
\end{itemize}
\end{lemma}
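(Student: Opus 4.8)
The plan is to write down the finite set $B$ by hand — using the global injectivity of $f$, which makes backward orbits single sequences — and then to verify the dichotomy by following the forward orbit of $J_0$ one step at a time.

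\emph{Construction of $B$.} Write $J=(a,b)$, let $D=\{x_0,\dots,x_{n-1}\}$ be the set of left endpoints of the partition intervals, and put $Z=D\cup\{a,b\}$. Since $f$ is injective, each point of $I$ has at most one $f$-preimage in $I$, so for every $z\in Z$ the backward orbit $w^z_0=z$, $w^z_{i+1}=f^{-1}(w^z_i)$ is a well-defined sequence, finite or infinite. Let $k^*(z)\ge1$ be the first index at which $w^z_i$ is either undefined or lies in $\overline{J}\cup D$ (and $k^*(z)=\infty$ if no such index exists). I would then set
$$
B=(D\cap J)\;\cup\;\bigl\{\,w^z_{k^*(z)}:\ z\in Z,\ k^*(z)<\infty,\ w^z_{k^*(z)}\in J\,\bigr\}.
$$
This $B$ is finite, with $\#B\le\#(D\cap J)+\#Z$: indeed, for $1\le i<k^*(z)$ the point $w^z_i$ lies outside $\overline{J}\cup D$, hence outside $J$, so along the backward orbit of a fixed $z$ the only point that can lie in $J$ is $w^z_{k^*(z)}$, and each $z$ therefore contributes at most one element.

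\emph{The dichotomy.} Fix an open interval $J_0\subseteq J\setminus B$. As $B\supseteq D\cap J$, the interval $J_0$ lies in the interior of a single partition interval $I_i$; since $f|_{I_i}$ is $\lambda$-Lipschitz and injective it is strictly monotone, so $f(J_0)$ is an open interval with $|f(J_0)|\le\lambda|J_0|$. I then iterate, maintaining the invariant that $f(J_0),\dots,f^m(J_0)$ are open intervals contained in $I\setminus(\overline{J}\cup D)$ — so each lies in a single partition interval, making $f^{m+1}(J_0)$ an open interval. There are three cases for $f^{m+1}(J_0)$. If it is contained in $J$, case $(ii)$ holds with this $m$ (and $f(J_0),\dots,f^m(J_0)\subseteq I\setminus(\overline{J}\cup D)\subseteq I\setminus(J\cup D)$, as required). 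If it meets $\overline{J}\cup D$ without being contained in $J$, then it contains some $z\in Z$ in its interior; pulling $z$ back $m+1$ steps along $f^{-1}$ — legitimate because $z\in f^{m+1}(J_0)$ and each intermediate image $f^j(J_0)$, $1\le j\le m$, lies in a single partition interval — produces a point of $J_0$ equal to $w^z_{m+1}$, with $w^z_1,\dots,w^z_m\notin\overline{J}\cup D$ and $w^z_{m+1}\in J$; hence $k^*(z)=m+1$ and $w^z_{m+1}\in B$, contradicting $J_0\cap B=\emptyset$, so this case cannot occur. Finally, if $f^{m+1}(J_0)$ is disjoint from $\overline{J}\cup D$, the induction continues. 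If the first case never arises, then $f^m(J_0)\subseteq I\setminus(\overline{J}\cup D)\subseteq I\setminus J$ is an open interval for every $m\ge1$; pairwise disjointness, the remaining assertion of $(i)$, is automatic from injectivity: if $f^a(J_0)\cap f^b(J_0)\neq\emptyset$ with $a<b$, then $f^a(u)=f^a(f^{b-a}(v))$ for some $u,v\in J_0$, so injectivity of $f^a$ gives $u=f^{b-a}(v)\in f^{b-a}(J_0)\subseteq I\setminus J$, contradicting $u\in J_0\subseteq J$.

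The one genuine obstacle is the construction of $B$ and the finiteness claim. The naive candidate — all $x\in J$ whose forward orbit ever meets $D\cup\partial J$ — is typically infinite; what saves the day is the combination of global injectivity (so that the backward orbit of an obstacle point $z$ is a single sequence) with the observation that such a backward orbit can return to $J$ only at the first moment it re-enters $\overline{J}\cup D$. Everything after that is bookkeeping, using repeatedly that $f$ is globally injective and that each $f|_{I_i}$ is a continuous strictly monotone map.
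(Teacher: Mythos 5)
Your proof is correct and follows essentially the same strategy as the paper's: define a finite set $B$ from backward orbits of the obstacle points $\{x_0,\dots,x_{n-1}\}\cup\partial J$, then follow the forward orbit of $J_0$ step by step, using $J_0\cap B=\emptyset$ to rule out hitting a discontinuity or crossing $\partial J$. The only cosmetic difference is your stopping rule: the paper takes, for each obstacle $x$, the minimal $\tau_x\ge 0$ with $f^{-\tau_x}(\{x\})\subset J$ and puts that single preimage in $B$, whereas you stop the backward orbit of $z\in Z$ at the first time $k^*(z)\ge1$ it lands in $\overline{J}\cup D$ (and separately adjoin $D\cap J$). Since every point of $\overline{J}\cup D$ where your backward orbit halts is itself an element of $Z$ (if not already in $J$), the orbit ``restarts'' from there, so the two constructions catch the same set of first returns to $J$; both yield a finite $B$ and both support the same case analysis on $f^{m+1}(J_0)$. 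Your dichotomy argument, including the use of global injectivity to pull $z$ back uniquely to $J_0$ and to get pairwise disjointness in case $(i)$, matches the paper's.
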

\begin{proof} Let $J\subset I$ be an open interval. Given $x\in I$, set
\begin{equation}\label{trenta}
\tau_x=\min \,\left\{k\ge 0: f^{-k}\big(\{x\}\big)\subset J \right\},
\end{equation}
where by convention $\inf\emptyset=\infty$. Let
$$B=\bigcup\left\{f^{-\tau_x}(\{x\}): x\in \{x_0,x_1,\ldots,x_{n-1}\}\cup \partial J \,\,\textrm{and}\,\, \tau_x<\infty\right\}.$$
Let $J_0\subset J{\setminus }B$ be an open interval, then one of the following alternatives happens:  $\{f^k(J_0)\}_{k\ge 1}\subset I{\setminus }J$ or  there exists an  integer $\ell\ge 1$ such that $f^\ell(J_0)\cap J\neq \emptyset$. In the first case, by the injectivity of $f$ and also because  $J_0\subset J{\setminus B}$, we have that
$J_0$, $f(J_0)$, $f^2(J_0)$,\ldots are pairwise disjoint sets contained in $I{\setminus} \{x_0,x_1,\ldots,x_{n-1}\}$. Since each $f\vert_{I_i}$ is Lipschitz continuous, we conclude that $f^k(J_0)$ is an open interval for every $k\ge 0$, which proves $(i)$.
As for the second alternative, let $m=\min\,\left\{\ell\ge 1: f^\ell(J_0)\cap J\neq\emptyset\right\}-1$. If $m=0$, then $f(J_0)\cap J \neq\emptyset$, which together with the fact that $J_0\subset J{\setminus B}$ implies that
$f(J_0)$ is an open subset of $J$. Otherwise, if $m\ge 1$, then proceeding as in the first case yields that the sets $f(J_0),\ldots, f^{m}(J_0)$ are pairwise disjoint open subintervals of $I{\setminus} \big(J\cup \{x_0,x_1,\ldots,x_{n-1}\})$. Moreover, because $J_0\subset J{\setminus B}$, we have that   $f^{m+1}(J_0)\cap J \neq\emptyset$ implies  that $f^{m+1}(J_0)$ is an open subinterval of $J$.
 \end{proof}
\noindent \textbf{Remark}. The item $(ii)$ of Lemma \ref{lematwo} implies that $f^{m+1}\vert_{J_0}:J_0\mapsto f^{m+1}(J_0)$ is a bijective contraction.
 
  \begin{lemma}\label{lemathree} If for some $x\in I$ and $1\le i\le n$, the set
$\left\{x,f(x),f^2(x),\ldots\right\}\cap (x_{i-1},x_i)$ is infinite and $\omega(x)\cap (x_{i-1},x_i)=\emptyset$, then
$\omega(x)$ is finite.
\end{lemma}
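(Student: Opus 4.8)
The plan is to apply Lemma \ref{lematwo} to the open interval $J=(x_{i-1},x_i)$, which reorganizes the orbit of $x$ as a sequence of excursions governed by a first–return map that is itself a piecewise contraction on finitely many pieces, and then to use the hypothesis $\omega(x)\cap J=\emptyset$ to pin down every accumulation point of the orbit. First I dispose of the trivial case: if the iterates $f^k(x)$ are not pairwise distinct, then $x$ is eventually periodic and $\omega(x)$ is finite; so I may assume $\{f^k(x)\}_{k\ge 0}$ is an infinite set of pairwise distinct points, in which case $\omega(x)$ is exactly the set of accumulation points of the sequence $(f^k(x))_{k\ge 0}$.

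Applying Lemma \ref{lematwo} to $J=(x_{i-1},x_i)$ produces a finite set $B$, and $J\setminus B$ is a finite disjoint union of open intervals $J_0^{(1)},\dots,J_0^{(p)}$, each satisfying alternative $(i)$ or alternative $(ii)$ of the lemma. An orbit point lying in an interval of type $(i)$ has all its forward iterates outside $J$, which contradicts the hypothesis that $J$ is visited infinitely often; hence every visit of the orbit to $J$ occurs either in $B$ or in a type-$(ii)$ interval. Since the orbit is injective and $B$ is finite, only finitely many visits occur in $B$, so there is an $L$ such that, writing $y_\ell:=f^{k_\ell}(x)$ for the $\ell$-th orbit point lying in $J$, we have $y_\ell\in J_0^{(s(\ell))}$ with $J_0^{(s(\ell))}$ of type $(ii)$ for every $\ell\ge L$. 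By alternative $(ii)$, the iterates $f(y_\ell),\dots,f^{m_{s(\ell)}}(y_\ell)$ lie in $I\setminus\big(J\cup\{x_0,\dots,x_{n-1}\}\big)$ while $f^{m_{s(\ell)}+1}(y_\ell)\in J$; thus the blocks $\{k_\ell,k_\ell+1,\dots,k_\ell+m_{s(\ell)}\}$ partition the time set $[k_L,\infty)$ and $y_{\ell+1}=f^{m_{s(\ell)}+1}(y_\ell)$.

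Now for each type-$(ii)$ index $s$ and each $0\le j\le m_s$, the map $f^j\vert_{J_0^{(s)}}$ is a composition of restrictions of $f$ to single partition intervals — using $J_0^{(s)}\subset J\subset I_i$ together with the fact that $f(J_0^{(s)}),\dots,f^{j-1}(J_0^{(s)})$ avoid the partition points — hence is $\lambda^j$-Lipschitz and extends to a continuous map $F_{s,j}$ on the compact interval $\overline{J_0^{(s)}}$. Every $k\ge k_L$ has the form $k=k_\ell+j$ with $\ell\ge L$ and $0\le j\le m_{s(\ell)}$, so $f^k(x)=F_{s(\ell),j}(y_\ell)$; therefore the tail of the orbit is a finite union, over the finitely many pairs $(s,j)$, of the sequences $\big(F_{s,j}(y_\ell)\big)_{\ell\ge L,\ s(\ell)=s}$. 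Any accumulation point of the orbit is thus an accumulation point of one of these sequences, hence equals $F_{s,j}(p)$ for some $p\in\overline{J_0^{(s)}}$ that is a subsequential limit of $(y_\ell)$, by compactness of $\overline{J_0^{(s)}}$ and continuity of $F_{s,j}$. Such a $p$ is a subsequential limit of the orbit, so $p\in\omega(x)\cap\overline{J_0^{(s)}}\subseteq\omega(x)\cap[x_{i-1},x_i]$, and the hypothesis $\omega(x)\cap(x_{i-1},x_i)=\emptyset$ forces $p\in\{x_{i-1},x_i\}$. Consequently $\omega(x)\subseteq\bigcup_{s}\bigcup_{j=0}^{m_s}F_{s,j}\big(\{x_{i-1},x_i\}\big)$, which is a finite set.

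The main obstacle is the bookkeeping of the second paragraph: one must verify carefully that Lemma \ref{lematwo} genuinely packages the visits to $J$ into finitely many excursion types of bounded length controlled by a true first–return map, correctly accounting for the finite exceptional set $B$ and for the assertion that the intermediate iterates along an excursion miss the partition points (this is precisely what makes the excursion maps $F_{s,j}$ continuous). Once this structure is established, the collapse via $\omega(x)\cap[x_{i-1},x_i]\subseteq\{x_{i-1},x_i\}$ is immediate.
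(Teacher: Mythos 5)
Your proof is correct. The key lemma and the key observation are the same as in the paper: you apply Lemma~\ref{lematwo} to $J=(x_{i-1},x_i)$ and exploit the hypothesis $\omega(x)\cap J=\emptyset$ to force the return points of the orbit to $J$ to accumulate only at $\{x_{i-1},x_i\}$. Where you diverge is in the finishing argument. The paper splits into three cases according to whether the returns to $J$ accumulate at $x_{i-1}$, at $x_i$, or at both, chooses a small sub-interval $J_0$ (or a pair $J_0',J_0''$) of $J\setminus B$ adjacent to the accumulation point(s), and shows that the induced first-return contraction maps $J_0$ into itself (or swaps $J_0'$ and $J_0''$), so the orbit is trapped near a genuine periodic orbit of $f$ and $\omega(x)$ is that periodic orbit. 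You instead take \emph{all} connected components of $J\setminus B$ at once, note that only finitely many excursion maps $F_{s,j}$ (each uniformly continuous, hence extendable to $\overline{J_0^{(s)}}$) generate the orbit tail, and then use compactness plus continuity to conclude $\omega(x)\subseteq\bigcup_{s,j}F_{s,j}\big(\{x_{i-1},x_i\}\big)$, a finite set. This avoids the case analysis entirely and does not need to exhibit a trapping region; it yields a weaker structural conclusion (a finite set rather than an explicit periodic orbit), but that is all the lemma requires, and the argument is cleaner. The bookkeeping you flag as the main obstacle is indeed the right thing to worry about, but your verification is adequate: the first-return time is constant on each type-$(ii)$ component by Lemma~\ref{lematwo}$(ii)$, the intermediate images avoid the partition endpoints (which gives Lipschitz continuity of $f^j$ on each component), and the injectivity of the orbit plus finiteness of $B$ ensure that eventually every visit to $J$ lands in a type-$(ii)$ component, so the excursion blocks partition the tail of the time axis as you claim.
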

\begin{proof}
By hypothesis, we have that
\begin{itemize} \item [(H1)] $\left\{x,f(x),f^2(x),\ldots\right\}\cap (x_{i-1},x_i)$ is infinite;
\item [(H2)] $\left\{x,f(x),f^2(x),\ldots\right\}\cap K$ is finite for all compact set $K\subset (x_{i-1},x_i)$.
\end{itemize} 
By (H1), the orbit of $x$ returns to $J=(x_{i-1},x_i)$ infinitely many times. Let $1\le k_1<k_2<\cdots$ denote the return times of $x$ to $J$ under the action of $f$. Because of (H2), we have only three cases to consider.\\

\indent Case (a). $\lim_{j\to\infty} f^{k_j}(x)=x_{i-1}$. \\

Let $B$ the finite set given by Lemma \ref{lematwo} considering $J=(x_{i-1},x_i)$. Let $\epsilon>0$ be so small that
$J_0=(x_{i-1},x_{i-1}+\epsilon)$ is a subset of $J{\setminus} B$. Notice that the alternative $(i)$ of Lemma \ref{lematwo} cannot occur. In fact, since $f^{k_{j}}(x)\downarrow x_{i-1}$, we have that $f^k(J_0)\cap J\neq\emptyset$ for many positive values of $k$. By exclusion, the item $(ii)$ of Lemma \ref{lematwo} is true, then there exists $m\ge 0$ such that
$f^{m+1}(J_0)$ is an open subinterval of $J$ and, if $m\ge 1$, then
 $f(J_0),\ldots,f^m(J_0)$ are open subintervals of $I{\setminus} \big(J\cup \{x_0,x_1,\ldots,x_{n-1}\})$. In particular, if $y\in J_0$, then $m+1$ is  the first return time of $y$ to $J$. This means that if $j_0\ge 1$ is such that $\{f^{k_j}(x)\}_{j\ge  j_0}\subset J_0$, then $\{f^{k_j}(x)\}_{j> j_0}\subset f^{m+1}(J_0)$, implying that 
 $x_{i-1}$ belongs to the boundary of the open interval $f^{m+1}(J_0)$. Moreover, since $f^{m+1}\vert_{J_0}{:}J_0\to f^{m+1}(J_0)$ is a bijective contraction (see the Remark after Lemma \ref{lematwo}), we have that
$f^{m+1}(J_0)\subset J$ is an open interval with length smaller than $\epsilon$ and with an endpoint in $x_{i-1}$, thus
$f^{m+1}(J_0)\subset (x_{i-1},x_{i-1}+\epsilon)=J_0$. This implies that 
$\omega(x)$ is finite.\\

\indent Case (b). $\lim_{j\to\infty} f^{k_j}(x)=x_{i}$.\\

Just proceed as in Case (a) considering now $J_0=(x_{i}-\epsilon,x_{i})$. \\

\indent Case (c). $\bigcap_{\ell\ge 1} \overline{\bigcup_{j\ge \ell} \big\{f^{k_j}(x)\big\}}=\{x_{i-1},x_i\}.$
\\

The proof presented here  is a variation of that used in Case (a). Let $\epsilon>0$ be so small that $J_0'=(x_{i-1},x_{i-1}+\epsilon)$ and
$J_0'' =(x_{i}-\epsilon,x_{i})$ are contained in $J{\setminus} B$. Then by the same arguments used in Case (a), there exist $m', m''\ge 0$ such that $f^{m'+1}(J_0')$ and $f^{m''+1}(J_0'')$ are disjoint open subintervals of $J$ and, if $m'\ge 1$ (respectively, $m '' \ge 1$), then
$f(J_0'),\ldots,f^{m'}(J_0')$ \big(respectively, $f(J_0''),\ldots,f^{m''}(J_0'')$\big) are open subintervals of $I{\setminus} \big(J\cup \{x_0,x_1,\ldots,x_{n-1}\})$. In particular, if $y\in J_0'$ (respectively, if $y\in J_0''$), then $m'+1$ (respectively, $m''+1$) is the first return time of $y$ to $J$. This means that if $j_0\ge 1$ is such that 
$\{f^{k_j}(x)\}_{j\ge j_0}\subset J_0'\cup J_0''$, then $\{f^{k_j}(x)\}_{j> j_0}\subset f^{m'+1}(J_0')\cup f^{m''+1}(J_0'')$, implying that $x_{i-1}\in\partial f^{m''+1}(J_0'')$ and $x_{i}\in\partial f^{m'+1}(J_0')$. Moreover, since $f^{m'+1}\vert_{J_0'}{:}J_0'\to f^{m'+1}(J_0')$ and $f^{m''+1}\vert_{J_0''}{:}J_0''\to f^{m''+1}(J_0'')$ are bijective contractions, we can argue in the same way as in Case (a) to conclude that $f^{m'+1}(J_0')\subset J_0''$ and $f^{m''+1}(J_0'')\subset J_0'$, proving that $\omega(x)$ is finite.
\end{proof}
Lemma \ref{lemathree} leads to the following result.
\begin{corollary}\label{cor35} Let $x\in I$ be such that $\omega(x)$ is infinite. If for some $1\le i\le n$, the set
$\left\{x,f(x),f^2(x),\ldots\right\}\cap (x_{i-1},x_i)$ is infinite, then $\omega(x)\cap (x_{i-1},x_i)\neq\emptyset$.
\end{corollary}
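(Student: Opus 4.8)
The plan is simply to observe that Corollary \ref{cor35} is the contrapositive of Lemma \ref{lemathree}, so no new work is required beyond a careful invocation of that lemma. Concretely, I would argue by contradiction: suppose $x\in I$ is such that $\omega(x)$ is infinite, that $\left\{x,f(x),f^2(x),\ldots\right\}\cap (x_{i-1},x_i)$ is infinite for some fixed $1\le i\le n$, and yet $\omega(x)\cap (x_{i-1},x_i)=\emptyset$. These last two conditions are precisely the hypotheses of Lemma \ref{lemathree} (the first is hypothesis (H1) there, the second is equivalent to (H2), since a subset of a bounded interval is finite exactly when its intersection with every compact subinterval is finite and it has no accumulation point inside the open interval). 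Hence Lemma \ref{lemathree} applies and yields that $\omega(x)$ is finite, contradicting our standing assumption that $\omega(x)$ is infinite. Therefore $\omega(x)\cap (x_{i-1},x_i)\neq\emptyset$, as claimed.

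The only point that deserves a line of care — and it is the nearest thing to an obstacle — is checking that ``$\omega(x)\cap (x_{i-1},x_i)=\emptyset$'' does translate into condition (H2) of Lemma \ref{lemathree}, i.e.\ that $\left\{x,f(x),f^2(x),\ldots\right\}\cap K$ is finite for every compact $K\subset (x_{i-1},x_i)$. This is immediate: if some compact $K\subset (x_{i-1},x_i)$ met the orbit in infinitely many points, those points would accumulate (by compactness of $K$) at a point of $K\subset (x_{i-1},x_i)$, and that accumulation point would lie in $\omega(x)\cap (x_{i-1},x_i)$, contradicting the assumption that this set is empty. With that verification in place the corollary follows at once.
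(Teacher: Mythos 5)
Your proof is correct and is precisely the derivation the paper has in mind: the paper offers no argument at all, merely noting that Lemma \ref{lemathree} ``leads to'' Corollary \ref{cor35}, and you have spelled out why — the corollary is the contrapositive, read as ``$(\neg C \wedge A)\Rightarrow \neg B$'' of the lemma's ``$(A\wedge B)\Rightarrow C$.'' One small remark: the (H1)/(H2) check you include is harmless but unnecessary, since (H1) and (H2) are consequences the paper derives \emph{inside} the proof of Lemma \ref{lemathree} from its stated hypotheses, and those stated hypotheses (orbit meets $(x_{i-1},x_i)$ infinitely often; $\omega(x)\cap(x_{i-1},x_i)=\emptyset$) are exactly what you already have in hand, so the lemma applies verbatim without translating into (H2).
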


We will also need the following result, which is a variation of
 \cite[Theorem 2.1]{BP}.
 
\begin{theorem}\label{proa} Let $x\in I$ be such that $\Lambda=\omega(x)$ is infinite, then there exists a non-atomic $f$-invariant Borel probability measure whose support is $\Lambda$.
\end{theorem}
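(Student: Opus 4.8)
The plan is to build $\mu$ by the Krylov--Bogolyubov averaging procedure along the forward orbit of $x$ --- essentially the construction of \cite[Theorem 2.1]{BP}, the present statement being its ``variation'' from piecewise affine to piecewise Lipschitz contractions --- and then to use the injectivity of $f$ together with Lemma \ref{lemafour} to force the resulting measure to be non-atomic and to have support exactly $\Lambda$. Concretely, one passes to the compact interval $\overline I=[0,1]$, regards $f$ as a Borel self-map there, and forms the empirical measures $\mu_N=\frac1N\sum_{k=0}^{N-1}\delta_{f^k(x)}$; a weak-$*$ subsequential limit $\mu$ exists by compactness. Since $\Lambda$ is infinite the orbit $\{x,f(x),f^2(x),\dots\}$ is an infinite set on which $f$ acts injectively, and since every point of $[0,1]\setminus\Lambda$ has a neighbourhood met by the orbit only finitely often one gets $\operatorname{supp}(\mu)\subseteq\Lambda$; the asymptotic invariance $f_*\mu_N-\mu_N=\tfrac1N(\delta_{f^N(x)}-\delta_x)\to 0$ then gives $f$-invariance of $\mu$ provided $\mu$ charges none of the finitely many discontinuities of $f$, which is the one delicate point of the existence part and is dealt with exactly as in \cite{BP}.

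The first genuinely new step is non-atomicity. Suppose $\mu(\{p\})>0$ for some $p$. Since $f$ is injective, $f^{-1}\big(\{f^{k}(p)\}\big)=\{f^{k-1}(p)\}$ for every $k\ge 1$, so $f$-invariance of $\mu$ gives $\mu(\{f^{k}(p)\})=\mu(\{p\})>0$ for all $k\ge 0$. As $\mu$ is a probability measure, the forward orbit of $p$ must be finite, and then injectivity of $f$ forces $p$ itself to be periodic. But $\mu(\{f^{k}(p)\})>0$ places the whole periodic orbit of $p$ inside $\operatorname{supp}(\mu)\subseteq\Lambda=\omega(x)$, contradicting Lemma \ref{lemafour}, according to which an infinite $\omega$-limit set of $f$ contains no periodic orbit (indeed, is minimal). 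Hence $\mu$ is non-atomic; in particular $\mu$ charges no discontinuity of $f$, which retroactively secures the invariance used above.

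Next one shows $\operatorname{supp}(\mu)=\Lambda$. Let $D\subseteq\{x_0,\dots,x_{n-1}\}$ be the finite discontinuity set of $f$. By injectivity each $f^{-k}(D)$ is finite, so $\bigcup_{k\ge 0}f^{-k}(D)$ is countable and hence $\mu$-null, $\mu$ being non-atomic. Since $\mu(\operatorname{supp}(\mu))=1$, there is a point $y\in\operatorname{supp}(\mu)$ whose entire forward orbit avoids $D$; along such an orbit $f$ is continuous, and using continuity of $f$ at each $f^{j}(y)$ together with $f$-invariance of $\mu$ one checks inductively that $f^{k}(y)\in\operatorname{supp}(\mu)$ for all $k$, whence $\overline{\{y,f(y),f^2(y),\dots\}}\subseteq\operatorname{supp}(\mu)$. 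On the other hand $y\in\operatorname{supp}(\mu)\subseteq\Lambda$, and $\Lambda$ is minimal by Lemma \ref{lemafour}, so $\overline{\{y,f(y),f^2(y),\dots\}}=\Lambda$. Therefore $\Lambda\subseteq\operatorname{supp}(\mu)$, and together with the reverse inclusion this yields $\operatorname{supp}(\mu)=\Lambda$. Thus $\mu$ is a non-atomic $f$-invariant Borel probability measure with support precisely $\Lambda$.

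The measure-theoretic bookkeeping above is routine; I expect the real obstacle to lie upstream, in Lemma \ref{lemafour}. Everything hinges on knowing that an infinite $\omega$-limit set of an injective piecewise contraction contains no periodic orbit and is minimal: this single structural fact simultaneously kills the atoms and pins down the support. The only other subtlety is the discontinuity of $f$ along $\{x_0,\dots,x_{n-1}\}$, which obstructs a one-line Krylov--Bogolyubov argument; but this is exactly the issue resolved in \cite[Theorem 2.1]{BP}, and since every estimate used there is stable under replacing ``affine'' by ``$\lambda$-Lipschitz'', adapting that argument to the present setting is straightforward.
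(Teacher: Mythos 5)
Your high-level plan matches the paper's (Krylov--Bogolyubov averaging, pass to a weak-$*$ limit, use Lemma~\ref{lemafour} together with the machinery of \cite[Theorem 2.1]{BP}), but your argument leans on statements that Lemma~\ref{lemafour} does \emph{not} contain, and this creates a genuine gap. You repeatedly invoke ``Lemma~\ref{lemafour}, according to which an infinite $\omega$-limit set of $f$ contains no periodic orbit (indeed, is minimal),'' and you later use minimality of $\Lambda$ to conclude $\Lambda\subseteq\operatorname{supp}(\mu)$. But Lemma~\ref{lemafour} says nothing of the sort. What it actually establishes is a quantitative bound on the empirical measures: for every $y\in I$ and $\epsilon>0$ there is a small open interval $J_y\ni y$ and an index $j_0$ with $\nu_{m_j}(J_y)<\epsilon$ for all $j\ge j_0$; together with the Portmanteau-type identity $(\ref{x49})$ this gives $\nu(J_y)\le\epsilon$, and letting $\epsilon\to0$ kills all atoms directly --- at discontinuity points and everywhere else. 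The ``moreover'' clause of the lemma then asserts outright that $\operatorname{supp}(\nu)=\Lambda$, so the support identification is also given to you, with no detour through minimality. In short, Lemma~\ref{lemafour} already does both jobs you are trying to redo by hand, and it does them without ever asserting that $\Lambda$ is minimal or periodic-orbit-free.

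Beyond the misattribution, the substitute facts you rely on are not established anywhere in the paper, and at least one of your uses of them is circular. Your non-atomicity argument needs $f$-invariance of $\mu$ to propagate a hypothetical atom along the orbit, but you obtain invariance only ``provided $\mu$ charges none of the finitely many discontinuities,'' which is itself a (partial) non-atomicity statement; saying that non-atomicity ``retroactively secures'' invariance is not a proof. The paper avoids this loop precisely because the estimate in Lemma~\ref{lemafour} controls $\nu_{m_j}$ of small intervals \emph{before} any invariance is invoked, so non-atomicity (and hence not charging the discontinuity set) is available first, and invariance follows as in \cite[Theorem 2.1]{BP}. Likewise, your derivation of $\Lambda\subseteq\operatorname{supp}(\mu)$ uses minimality of $\Lambda$, a structural claim about $\omega$-limit sets of injective piecewise contractions that is plausible but is never proved in the paper and certainly does not follow from Lemma~\ref{lemafour} as stated. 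To repair the proof you should discard the periodic-orbit/minimality route entirely and instead use the actual content of Lemma~\ref{lemafour}: non-atomicity from $\nu(J_y)\le\epsilon$ with $\epsilon$ arbitrary, and $\operatorname{supp}(\nu)=\Lambda$ from the lemma's second clause, followed by the invariance argument of \cite[Theorem 2.1]{BP} with \cite[Lemma 3.2]{BP} replaced by Lemma~\ref{lemafour}.
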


The proof of Theorem \ref{proa} depends on Lemma \ref{lemafour} stated below. In what follows, let $x\in I$ be such that $\Lambda=\omega(x)$ is infinite. As $x$ is not periodic, there exists $\ell\ge 0$ such that $\{f^k(x):k\ge \ell\}\cap \{x_0,x_1,\ldots,x_{n-1}\}=\emptyset$. Hence, by replacing $x$ by $f^{\ell}(x)$ if necessary, we  assume that
 \begin{equation}\label{naa}
 \left\{x,f(x),f^2(x),\ldots \right\}\cap \{x_0,x_1,\ldots,x_{n-1}\}=\emptyset.
 \end{equation}
Denote by $\{\nu_m\}_{m\ge 1}$ the sequence of Borel probability measures on $I$ defined by 
$$
\nu_m=\dfrac{1}{m}\sum_{k=0}^{m-1} \delta_{f^{k}(x)},
$$
where $\delta_{f^k(x)}$ is the Dirac probability measure on $I$ concentrated at $f^k(x)$. By the
Banach-Alaoglu Theorem, there exist a Borel probability measure on $I$, denoted henceforth by $\nu$, and a subsequence of  $\{\nu_m\}_{m\ge 1}$, denoted henceforth by $\{\nu_{m_j}\}_{j\ge 1}$, that converges to $\nu$ in the weak$^\star$- topology. We will keep these notations until the end of this section.

\begin{lemma}\label{lemafour} Let $y\in I$, then there exist an open subinterval $J_y$ of $I$ containing $y$ and an integer $j_0\ge 1$ such that $\nu_{m_j}(J_y)<\epsilon$ for every $j\ge j_0$. Moreover, the support of $\nu$ is $\Lambda=\omega(x)$.
\end{lemma}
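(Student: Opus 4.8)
The plan is to prove the two assertions in sequence, using the earlier lemmas about the combinatorics of orbits relative to intervals. For the first assertion, fix $y\in I$ and $\epsilon>0$. I would pick a small open interval $J$ around $y$ and apply Lemma \ref{lematwo} to obtain the finite exceptional set $B$; then I would shrink to an open subinterval $J_y\subset J\setminus B$ still containing $y$ (if $y\in B$ this needs a tiny bit of care, but since $B$ is finite we can always take $J_y$ to be a half-open sliver on one side of $y$, or simply choose the original $J$ so that $y\notin B$, which is possible because $B$ does not depend on the precise choice of $J$ once $J$ is fixed — more honestly, one first fixes $J$, gets $B$, and if $y\in B$ replaces $y$'s role by noting $\nu_{m_j}(\{y\})\to 0$ separately and handling the two half-intervals $(\,\cdot\,,y)$ and $(y,\cdot\,)$ of $J\setminus(B\cup\{y\})$ individually). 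The key point is the dichotomy of Lemma \ref{lematwo}: for such a $J_y$, either (i) $f(J_y),f^2(J_y),\ldots$ are pairwise disjoint, or (ii) after at most $m$ steps outside $J$ the orbit of $J_y$ returns into $J$ via a contraction. In case (i), the sets $J_y,f(J_y),f^2(J_y),\ldots$ are pairwise disjoint open intervals, so at most one iterate $f^k(x)$ with $0\le k\le m-1$ can lie in each; hence $\nu_m(J_y)\le$ (number of returns up to time $m$)$/m$, and because the $f^k(J_y)$ are disjoint this forces $m\,\nu_m(J_y)$ to be bounded by the number of disjoint intervals that fit, which is finite — actually the cleanest statement is that the return times to $J_y$ form a set of density zero, so $\nu_{m_j}(J_y)\to 0$ and one picks $j_0$ accordingly.

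The substantive case is (ii). Here the idea is the standard first-return / Kac-type counting: because $f^{m+1}|_{J_y}$ is a bijective contraction of $J_y$ into $J$ (the Remark after Lemma \ref{lematwo}), the successive returns of a point of $J_y$ to $J$ happen through a nested sequence of shrinking intervals, so $J_y$ is visited by the orbit of $x$ with times forming a set whose counting function up to $m$ grows sublinearly — more precisely, between two consecutive visits to $J_y$ there is at least one visit to $J\setminus J_y$ (since the return map to $J$ pushes $J_y$ to a strictly smaller interval, which may or may not meet $J_y$ again, but the disjointness from Lemma \ref{lematwo}(ii) of the intermediate iterates, together with injectivity of $f$, controls overcounting). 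I would make this rigorous by observing that the iterates $f^{k}(J_y)$ for $k$ in the "excursion" ranges are pairwise disjoint and disjoint from $J$, so again only boundedly many of the first $m$ orbit points of $x$ land in $J_y$ per unit excursion length, giving $\nu_{m}(J_y)\to 0$. In either case we obtain $j_0$ with $\nu_{m_j}(J_y)<\epsilon$ for all $j\ge j_0$, which is the first claim.

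For the second assertion, that $\mathrm{supp}\,\nu=\Lambda=\omega(x)$: the inclusion $\mathrm{supp}\,\nu\subset\overline{\{f^k(x):k\ge 0\}}$ is automatic since every $\nu_m$ is supported on that orbit closure, and in fact $\mathrm{supp}\,\nu\subset\omega(x)$ because any point $y\notin\omega(x)$ has a neighbourhood meeting the orbit only finitely often, so $\nu_m$ of that neighbourhood tends to $0$ and hence $\nu$ gives it zero mass. Conversely, to show $\omega(x)\subset\mathrm{supp}\,\nu$, take $p\in\omega(x)$ and any open $U\ni p$; the orbit of $x$ returns to $U$ infinitely often, and I would argue — using the weak$^\star$ convergence and a Portmanteau-type argument on a slightly smaller open set $U'$ with $\overline{U'}\subset U$ — that $\nu(U)\ge\limsup_j\nu_{m_j}(U')>0$, because the return times to $U'$ must have positive upper density. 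The positive-density claim is where I expect the main obstacle: a priori the returns to a small neighbourhood of a single $\omega$-limit point could have density zero (indeed the first claim shows they often do for points $y$ where case (i) or the contraction in case (ii) applies!). The resolution is that $\Lambda$ is the support of a limit measure and one does not need every individual point to be charged by the *sequence* $\nu_m$ directly; rather one uses that $\nu$ is $f$-invariant (a routine consequence of $\nu_m-f_*\nu_m\to 0$ in norm, valid because of \eqref{naa} which keeps the orbit off the discontinuities so that $f$ is continuous on a neighbourhood of $\mathrm{supp}\,\nu$) together with minimality of $f|_\Lambda$ — or, if minimality of the restriction is not yet available, a direct argument that an $f$-invariant measure supported in $\Lambda$ whose support were a proper closed invariant subset would contradict $\Lambda=\omega(x)$ being the orbit closure of a point whose forward orbit is dense in $\Lambda$. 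So the real work is: (a) the density-zero counting in case (ii) above, and (b) pinning down that $\mathrm{supp}\,\nu$ cannot be a proper subset of $\Lambda$, for which invariance of $\nu$ plus the structure of $\Lambda$ as an $\omega$-limit set is the lever. Once $\mathrm{supp}\,\nu=\Lambda$ is established, Theorem \ref{proa} follows by combining with \cite[Theorem 2.1]{BP} as indicated, the non-atomicity coming from the first assertion (every point has a neighbourhood of small $\nu$-measure, so no atoms).
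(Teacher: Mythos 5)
There are genuine gaps in both halves of your proposal. For the smallness claim, you try to deduce that the return times of the orbit of $x$ to $J_y$ have \emph{density zero}, so that $\nu_{m_j}(J_y)\to 0$ for a fixed $J_y$. This is false in the substantive case $y\in\Lambda$: the density of returns is strictly positive (indeed the paper shows the return times are \emph{bounded}, which the very statement of the lemma needs in order to pin down the support). The actual content of the first claim is that for every $\epsilon>0$ one can \emph{shrink} $J_y$ so that $\nu_{m_j}(J_y)<\epsilon$ eventually, not that a fixed $J_y$ has vanishing measure. The paper achieves the shrinking bound by a different argument than Lemma \ref{lematwo}: one looks at the limits $y_0=y,\ y_1=\lim_j f(f^{k_j}(x)),\ y_2=\lim_j f^2(f^{k_j}(x)),\ldots$ and shows $y_k=y$ for at most one $k\ge 1$ (otherwise one gets a contracting nest $A_p\subset A_0$ or $A_q\subset A_0$ contradicting $\omega(x)$ infinite); this yields, for any $r$, a $\delta_1(r)$ such that among any $r$ consecutive forward images of $(y-\delta,y)$ at most two meet $J_y$, whence a $3/r$ density bound with $r$ chosen large. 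Your sketch via the first-return map of Lemma \ref{lematwo}(ii) never produces this quantitative control and, as you yourself note, would prove too much.

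For the support inclusion $\Lambda\subset\operatorname{supp}\nu$, your fallback — invariance of $\nu$ together with minimality of $f\vert_\Lambda$, or a claim that an invariant measure inside $\Lambda$ whose support is a proper closed invariant set contradicts $\Lambda$ being an orbit closure — does not hold: an $\omega$-limit set of a single orbit need not be minimal, and an invariant measure can perfectly well live on a proper closed invariant subset of the orbit closure of a transitive point. The paper's argument is direct and does not pass through invariance: for $y\in\Lambda$ one shows (using a finite set $B$ built from preimages of $\{x_0,\ldots,x_{n-1}\}\cup\partial J_y$, in the spirit of Lemma \ref{lematwo}) that the return times of the orbit of $x$ to $J_y$ are \emph{bounded}, hence $\inf_{j\ge j_0}\nu_{m_j}(J_y)>0$, and then the Portmanteau-type identity $\nu(J_y)=\lim_j\nu_{m_j}(J_y)$ (valid for $\delta\in\Delta$) gives $\nu(J_y)>0$. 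You should replace both of your key steps with arguments of this type.
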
 
\begin{proof} Let $y\in I$ and $\epsilon>0$. We will prove that there exist $\delta>0$ and $j_0\ge 1$ such that the interval
$$ J_y=\begin{cases}
[0,\delta) & \textrm{if}\,\, y=0 \\
(y-\delta,y+\delta) & \textrm{if} \,\, y>0
\end{cases}
$$
satisfies $J_y\subset I$ and $\nu_{m_j}\left(J_y\right)<\epsilon$ for all $j\ge j_0$. Without loss of generality, we may assume that $y>0$ and $J_y=(y-\delta,y+\delta)$. Since $\nu$ is a probability measure, $\nu$ has at most countably many atoms, which means that the set
 $$\Delta=\left\{0<\delta<\min\, \{y,1-y\}: \nu\big(\{y-\delta,y+\delta\}\big)=0 \right\}$$ 
 contains arbitrarily small values of $\delta$. It follows from \cite[Theorem 6.1, p. 40]{P} that if
   $\delta\in \Delta$, then 
 \begin{equation}\label{x49}
 J_y\subset I\quad\textrm{and}\quad \nu(J_y)=\lim_{j\to\infty} \nu_{m_j}(J_y).
 \end{equation}

Now have two cases to consider. \\

Case I\,:  $y\not\in\Lambda$, that is, $y\not\in\omega(x)$. 

In this case, there exist $\delta\in\Delta$ and $j_0\ge 1$ such that
 $f^k(x) \not\in J_y$ for every $k\ge m_{j_0}$. Let $j_1\ge j_0$ be such that
  $m_{j}>\ m_{j_0}/\epsilon$ for every $j\ge j_1$, then
$$\nu_{m_j}(J_y)=\dfrac{1}{m_j}\# \left\{0\le k\le m_{j_0}-1: f^k(x)\in J_y\right\}\le \frac{m_{j_0}}{m_j}<\epsilon,\quad \forall j\ge j_1.$$
Moreover, making $j\to\infty$ and using $(\ref{x49})$ yield $\nu(J_y)=0$, implying that $y$ does not belong to the support of $\nu$.\\

Case II\,: $y\in\Lambda$. \\

First assume that 
there exists an increasing sequence  of integers  $1\le k_1<k_2<\cdots$ such that
$ f^{k_j}(x)\uparrow y$. Since $f$ is an injective piecewise contraction, the following limits are well-defined:
$$
 y_0=y, \quad y_1=\lim_{j\to\infty} f\big(f^{k_j}(x)\big), \quad y_2=\lim_{j\to\infty} f^2\big(f^{k_j}(x)\big),\quad \ldots$$
 We claim that $\#\{k\ge 1: y_k=y\}\le 1$. By way of contradiction, suppose that there exist $1\le p<q$ such that
 $y_{q}=y_{p}=y$. It is elementary to see that for every $\delta>0$  small enough and $A_0=(y-{\delta},y)$, the sets $A_1=f(A_0),A_2=f^2(A_0),\ldots, A_{q}=f^{q}(A_0)$ are open intervals of length less than $\delta$. Yet, $y_k\in \partial A_k$ for every $0\le k\le q$. Hence, either $A_{p}\subset A_0$ or $A_{q}\subset A_0$, which contradicts the fact that $\omega(x)$ is infinite. In this way, the claim is true. Then, there exists $r_0\ge 1$ such that $y_k\neq y$ for all $k\ge r_0$. In particular, given $r\ge 1$, there exists $\delta_1=\delta_1(r)$ such that
 for every $0<\delta<\delta_1$, 
  $$  \#\left\{0\le k\le r-1: f^k\big((y-\delta,y)\big)\cap J_y\neq\emptyset   \right\}\le 2.$$
  Let $r>0$ be such that $\frac{3}{r}<\frac{\epsilon}{3}$. Set $\delta_1=\delta_1(r)$. Then, for all $0<\delta<\delta_1$ with $\delta\in\Delta$ and for any $j$ large enough,
 $$\nu_{m_j}\big((y-\delta,y)\big)=\dfrac{1}{m_j}\# \left\{0\le k\le m_{j}-1: f^k(x)\in J_y\right\}\le \frac{3}{r}<\frac{\epsilon}{3}.$$
 Now assume that the sequence $1\le k_1<k_2<\dots$ does not exist, then for every $\delta$ small enough,
 $$ \nu_{m_j}\big((y-\delta,y\big))=0<\frac{\epsilon}{3}.$$
 Likewise, there exists $\delta_2>0$ such that for all $0<\delta<\delta_2$ with $\delta\in\Delta$, we have that
  $\nu_{m_j}\big((y,y+\delta)\big)<\frac{\epsilon}{3}$ for any $j$ large enough.
 Moreover, $\nu_{m_j}(\{y\})<\frac{\epsilon}{3}$ for any $j$ large enough. Putting all together, there exist
 $\delta>0$ with $\delta\in\Delta$ and $j_0\ge 1$ such that $\nu_{m_j}(J_y)<\epsilon$ for all $j\ge j_0$.
 
It remains to prove that in this case $y$ belongs to the support of $\nu$. By the above, we know that the orbit of $x$ enters in $J_y$ infinitely many times. If we prove that the return times of $x$ to $J_y$ are bounded, then we will conclude that $\inf_{j\ge j_0} \nu_{m_j}(J_y)>0$, which together with $(\ref{x49})$ will imply that $\nu(J_y)>0$. Let $S=\{x_0,x_1,\ldots,x_{n-1}\}\cup\partial J_y$ and $S'=\left\{z\in S: \cup_{k\ge 0} f^{-k}(\{z\})\cap J_y\neq \emptyset \right\}$. Given $z\in S'$, let $$\tau_z=\min\, \{k\ge 0:  f^{-k}(\{z\})\subset J_y\}$$ and
$B=\{f^{-\tau_z}(z): z\in S'\}$. If $U$ is a connected component of $J_y{\setminus B}$, then all points of $U$ either never return to $J_y$ or return to $J_y$ at the same time. The second case always happens when $U\subset J_y$ is a small interval with an endpoint at $y$. In particular, the return times of the points of the orbit of $x$ to $J_y$ are bounded.
\end{proof}

\begin{proof}[Proof of Theorem \ref{proa}] Theorem \ref{proa} is a variation of \cite[Theorem 2.1]{BP} where the hypotheses of no connection and no periodic orbit were weakened. Here we just point out which change is necessary in the proof of \cite[Theorem 2.1]{BP}. In this regard, \cite[Lemma 3.2]{BP} ought to be replaced by Lemma \ref{lemafour}.
The hypothesis that $f$ has no periodic orbit in the statement of \cite[Theorem 2.1]{BP} is not necessary: all we need is that $\omega(x)$ is  infinite. In this way, the claims of \cite[Theorem 2.1]{BP} hold in our context, which proves Theorem \ref{proa}.
\end{proof}

\section{Proof of Theorem \ref{thm1}}

Throughout this section, let $f{:}\,I\to I$ be an $n$-PC with associated partition $I_1,\ldots,I_n$ having endpoints
$0=x_0<x_1<\cdots< x_n=1$. We will need the following   elementary result.
 
\begin{lemma}[{\cite[Lemma 3.6]{NP}}]\label{lemma3.6}
There exist $r\le 2n$ pairwise disjoint open intervals $F_1,\ldots, F_r$  such that
$f^{k}({F_j})$, $1\le j\le r$, $k<0$, are empty sets, and $f^k(F_j)$, $1\le j\le r$, $k\ge 0$, are pairwise disjoint open intervals and $\Omega=\cup_{j=1}^r \cup_{k\ge 0} f^k(F_j)$
 is a dense subset of $I{\setminus}\{x_0,x_1,\ldots,x_{n-1}\}$ having Lebesgue measure $1$.
\end{lemma}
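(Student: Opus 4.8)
The plan is to take the intervals $F_j$ to be, up to finitely many points, the connected components of the set $A=I\setminus f(I)$ of points that have no $f$-preimage, cut down slightly so that their forward orbits never hit the interior partition points $x_1,\dots,x_{n-1}$. First I would prove a measure identity. Since $f$ is injective and piecewise $\lambda$-Lipschitz, each iterate $f^m$ is injective and, off the finite set $\bigcup_{k<m}f^{-k}(\{x_1,\dots,x_{n-1}\})$ of its discontinuities (finite because $f^{-k}$ of a point is at most a point), it is piecewise $\lambda^m$-Lipschitz; summing over the finitely many pieces gives $\mathrm{Leb}\big(f^m(I)\big)\le\lambda^m$. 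On the other hand, sorting each $y\in I$ according to the length $j$ of its maximal backward orbit $y,f^{-1}(y),f^{-2}(y),\dots$ identifies $f^j(A)$ with the set of points whose backward orbit has length $j$, so that $I=f^m(I)\sqcup\bigsqcup_{j=0}^{m-1}f^j(A)$ for every $m\ge1$. Letting $m\to\infty$, and using injectivity (so that the $f^j(A)$ are pairwise disjoint), yields $\sum_{j\ge0}\mathrm{Leb}\big(f^j(A)\big)=1$; hence $\bigcup_{j\ge0}f^j(A)$ has full Lebesgue measure and in particular is dense in $I$.

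Next I would isolate the partition points. Put $D=\{x_1,\dots,x_{n-1}\}$ and $E=\{y\in A:\ f^m(y)\in D\text{ for some }m\ge0\}$. The key observation is that $\#E\le n-1$: once $f$ is applied to a point of $A$ the orbit enters $f(I)$, which is disjoint from $A$, so the backward $f$-orbit of a given $x_i$ meets $A$ in at most one point, and by injectivity of the iterates that is the only point of $A$ ever mapped onto $x_i$. Now $f(I)=\bigcup_{i=1}^{n}f(I_i)$ is a union of $n$ intervals (each $f\vert_{I_i}$, being injective and continuous on the interval $I_i$, is monotone, so $f(I_i)$ is an interval), hence $A$, and therefore its interior, has at most $n+1$ connected components; deleting the $\le n-1$ points of $E$ creates at most $n-1$ more, so the connected components $F_1,\dots,F_r$ of $\mathrm{int}(A)\setminus E$ satisfy $r\le 2n$, and $\bigcup_{j}F_j$ differs from $A$ only by a finite set.

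It then remains to check the stated properties. Each $F_j\subset A$ has empty $f$-preimage, so $f^{-k}(F_j)=\emptyset$ for all $k\ge1$. Since no $f^m(F_j)$ meets $D$, each $f^m(F_j)$ is contained in a single $I_i$, on which $f$ is an injective continuous contraction; hence $f^{m+1}(F_j)$ is again an open subinterval of $[0,1)$, and by induction every $f^k(F_j)$, $k\ge0$, is an open interval, in particular one not containing $0$. Pairwise disjointness of the family $\{f^k(F_j)\}_{j,k}$ follows from injectivity of the iterates together with $F_j\subset I\setminus f(I)$ and $F_j\cap F_{j'}=\emptyset$ for $j\ne j'$. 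Consequently $\Omega=\bigsqcup_{j,k}f^k(F_j)$ has, for each $k$, the same Lebesgue measure as $f^k(A)$ (the two differ by a finite set), whence $\mathrm{Leb}(\Omega)=\sum_{k\ge0}\mathrm{Leb}\big(f^k(A)\big)=1$; full measure forces density in $I$, hence in $I\setminus\{x_0,\dots,x_{n-1}\}$, while $\Omega\subseteq I\setminus\{x_0,\dots,x_{n-1}\}$ because each $f^k(F_j)$ is an open interval missing $\{x_1,\dots,x_{n-1}\}$ and omitting $0$. I expect the main obstacle to be the second step: establishing $\#E\le n-1$ (equivalently, keeping $r\le 2n$) while ensuring every forward iterate of each $F_j$ remains an interval, i.e.\ avoids the partition points; the measure estimate, although essential, is routine once injectivity and the uniform contraction rate are invoked.
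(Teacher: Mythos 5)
Your proof is correct and follows the standard approach (essentially that of~\cite[Lemma 3.6]{NP}): take the ``gap set'' $A=I\setminus f(I)$, use injectivity and $\lambda$-contraction to show $\bigsqcup_{k\ge0}f^k(A)$ has full measure, then trim $A$ by the finite set of preimages of the partition points to guarantee every forward iterate stays an open interval avoiding $\{x_0,\dots,x_{n-1}\}$, and count $r\le(n+1)+(n-1)=2n$. The only exposition wobble is the sentence ``Since no $f^m(F_j)$ meets $D$, each $f^m(F_j)$ is contained in a single $I_i$,'' which presupposes $f^m(F_j)$ is connected before the induction establishing it; the correct ordering is the straightforward induction you immediately invoke (open interval $\Rightarrow$ avoids $D$ $\Rightarrow$ sits in one $\mathrm{int}(I_i)$ $\Rightarrow$ image under the strictly monotone $f\vert_{I_i}$ is an open interval).
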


A non-empty compact subset $\Lambda\subset [0,1]$ is an {\it attractor} of $f$ if there exists 
$p\in I$ such that $\Lambda=\omega(p)$. Let $F_1,\ldots,F_r$ be as in the statement of Lemma \ref{lemma3.6}, then for each $1\le j\le r$, $\cup_{k\ge 0} f^k(F_j)\cap \{x_0,x_1,\ldots,x_{n-1}\}=\emptyset$, implying that $\omega(p_j)$ is the same for any $p_j\in\cup_{k\ge 0} f^k(F_j)$. In this way, the attractors
\begin{equation}\label{attractors} 
\Lambda_1=\omega(p_1),\ldots, \Lambda_r=\omega(p_r) 
\end{equation}
do not depend on the choice of $(p_1,\ldots,p_r)\in F_1\times\cdots\times F_r$. 
\begin{lemma}\label{34987} Let $p\in I$. If $\omega(p)$ is infinite, then $\omega(p)\in \Lambda_1\cup\cdots\cup\Lambda_r$.
\end{lemma}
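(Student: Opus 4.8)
The plan is to split according to whether the forward orbit of $p$ enters the full‑measure dense open set $\Omega=\bigcup_{j=1}^r\bigcup_{k\ge0}f^k(F_j)$ of Lemma \ref{lemma3.6}. Since $\omega(p)$ is infinite, $p$ is not eventually periodic, so its orbit is an infinite set and meets $\{x_0,\dots,x_{n-1}\}$ only finitely often (otherwise some value would recur, forcing a periodic point); replacing $p$ by a forward iterate we may assume $\{f^k(p):k\ge0\}\cap\{x_0,\dots,x_{n-1}\}=\emptyset$. If $p\in\Omega$, then $p$ belongs to exactly one of the pairwise disjoint intervals $f^{k_0}(F_{j_0})$, hence the whole forward orbit of $p$ lies in $\bigcup_{k\ge0}f^k(F_{j_0})$, and the paragraph defining $\Lambda_1,\dots,\Lambda_r$ then gives $\omega(p)=\Lambda_{j_0}$. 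So the content is the case $p\notin\Omega$, and there it suffices to exhibit one point $z\in\Omega$ with $\omega(z)=\omega(p)$: the case just treated, applied to $z$, then yields $\omega(p)=\omega(z)=\Lambda_j$ for some $j$.

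To produce such a $z$ I would first locate a convenient point of $\omega(p)$. As the orbit of $p$ is infinite, some interval $(x_{i-1},x_i)$ is visited infinitely often, so Corollary \ref{cor35} supplies a point $q\in\omega(p)\cap(x_{i-1},x_i)$; in particular $q$ lies in the interior of $I_i$. Since $\omega(p)$ is closed with empty interior one may fix $\delta>0$ with $V:=(q-\delta,q+\delta)\subseteq(x_{i-1},x_i)$ and $q\pm\delta\notin\omega(p)$. Let $B$ be the finite set given by Lemma \ref{lematwo} for $J=V$. The orbit of $p$ visits $V$ infinitely often but meets the finite set $B$ only finitely often (its orbit is injective), so there is $K$ with $f^k(p)\notin B$ for all $k\ge K$ and $f^K(p)\in V$; density of $\Omega$ then lets us pick $z\in\Omega$ in the same connected component of $V\setminus B$ as $f^K(p)$ and as close to $q$ as we wish.

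The heart of the argument is a shadowing estimate built from Lemma \ref{lematwo}. Taking a small interval $J_0\subseteq V\setminus B$ around $\{z,f^K(p)\}$, alternative $(i)$ of Lemma \ref{lematwo} is excluded because the orbit of $f^K(p)$ returns to $V$, so alternative $(ii)$ and the Remark after it apply: there is $m\ge0$ with $f^{m+1}\vert_{J_0}$ a bijective contraction onto an open subinterval of $V$ and with $J_0,f(J_0),\dots,f^m(J_0)$ each lying inside a single partition interval. Hence $z$ and $f^K(p)$ share their itineraries up to time $m$, share the first return time $m+1$ to $V$, and $\vert f^{m+1}(z)-f^{K+m+1}(p)\vert\le\lambda\,\vert z-f^K(p)\vert$. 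Iterating — at each return replacing $J_0$ by the connected component, relative to $V\setminus B$, of its image that contains the next return point of $p$, which is again of type $(ii)$ because both orbits keep returning to $V$ — one obtains that $z$ and $f^K(p)$ have the same itinerary and that $d\bigl(f^k(z),f^{K+k}(p)\bigr)\to0$ geometrically. Therefore $\omega(z)=\omega\bigl(f^K(p)\bigr)=\omega(p)$, which completes the proof.

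The main obstacle is making that iteration genuinely run for all time: the geometrically shrinking gap between the two orbits could, at some return, be straddled by a point of $B$ or by an endpoint of $V$, destroying the synchronization. The endpoints are harmless, because $q\pm\delta\notin\omega(p)$ keeps the returns of $p$ a fixed distance from $\partial V$. Controlling $B$ is the delicate point, and here I would invoke Theorem \ref{proa}: since $\omega(p)=\operatorname{supp}\mu$ for a non‑atomic invariant measure $\mu$, $\omega(p)$ is perfect, so $q$ and all later return points can be kept off the finitely many branch boundaries in $B$; taking $z$ close enough to $q$ relative to the (positive) distances from those return points to $B$ — estimated uniformly along the orbit, e.g. via a $\limsup$ bound on $\lambda^{-k}\operatorname{dist}(f^k(p),B)$ or via a nested choice of neighbourhoods — makes every return step survive. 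Securing that uniform estimate is where the real care lies.
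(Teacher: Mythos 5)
Your split into the two cases $p\in\Omega$ and $p\notin\Omega$ is reasonable, and the first case is handled exactly as the paper intends: once the orbit avoids $\{x_0,\dots,x_{n-1}\}$ and $p$ lies in some $f^{k}(F_{j_0})$, the observation preceding the lemma (that $\omega(p_j)$ is independent of the choice of $p_j\in\bigcup_{k\ge 0}f^k(F_j)$) gives $\omega(p)=\Lambda_{j_0}$ at once.

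The divergence, and the problem, is in the case $p\notin\Omega$. The paper's argument there is short and does not involve Lemma~\ref{lematwo}, Corollary~\ref{cor35}, shadowing, or Theorem~\ref{proa}. After passing to $y=f^{k_0}(p)$ whose orbit avoids the discontinuities, one uses only that $\Omega$ is a dense union of pairwise disjoint open intervals $f^k(F_j)$: since $y\notin\Omega$, the intervals $f^{k}(F_j)$ accumulate on $y$ from at least one side, their lengths tend to $0$, and there are only finitely many with $k$ below any bound, so infinitely many distinct intervals $f^{k_m}(F_{j_m})$ converge to $y$ with $k_m\to\infty$. Pigeonholing on the finitely many indices $j$ gives a single $j$ with $f^{k_m}(F_j)\to y$, hence $f^{k_m}(p_j)\to y$, i.e.\ $y\in\omega(p_j)$. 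Then $\omega(p)=\omega(y)\subset\omega(p_j)=\Lambda_j$ because $f^{\ell}$ is continuous at each point of the orbit of $y$. That is the whole proof.

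Your route instead tries to manufacture a point $z\in\Omega$ with $\omega(z)=\omega(p)$ by a first-return shadowing argument built on Lemma~\ref{lematwo}, and you yourself flag the sticking point: at each return the ever-shrinking gap between the two orbits must not be straddled by a branch point in $B$ or by $\partial V$, and ``securing that uniform estimate is where the real care lies.'' That gap is genuine and the sketch you offer does not close it. Knowing that $\omega(p)$ is perfect (via Theorem~\ref{proa}) lets you keep $q$ itself off $B$, but it gives no positive lower bound on the distances from the infinitely many return points $f^{K+k}(p)$ to the finite set $B$: that orbit can approach $B$ arbitrarily closely, and nothing in the proposal controls the rate against the geometric shrinking of $|f^{k}(z)-f^{K+k}(p)|$. (Invoking Theorem~\ref{proa} is also structurally undesirable here, since the paper proves this lemma before and independently of Theorem~\ref{proa}.) The correct and far simpler idea you are missing is that you do not need to synchronize orbits at all: density of the disjoint intervals $f^k(F_j)$ plus pigeonhole already puts $f^{k_0}(p)$ in some $\omega(p_j)$, and the inclusion $\omega(y)\subset\omega(z)$ for $y\in\omega(z)$ (valid here because the orbit of $y$ avoids discontinuities) finishes the argument.
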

\begin{proof} Since $\omega(p)$ is infinite, the $f$-orbit of $p$ is not periodic. In particular, there exists
$k_0\ge 0$ such that the $f$-orbit of $f^{k_0}(p)$ does not pass through discontinuities. By the density of $\Omega$, there exists $1\le j\le r$ such that $f^{k_0}(p)\in \omega(p_j)$. Then, $\omega(p)=\omega\big(f^{k_0}(p)\big)\subset \omega(p_j)=\Lambda_j$. 
\end{proof}
Without loss of generality, by replacing $r$ by a smaller number, we may assume that the sets
$\Lambda_1,\ldots,\Lambda_r$ are pairwise distinct. It follows from Lemma \ref{lemma3.6} that $S=I{\setminus}\Omega$ is a Lebesgue null set. Let $1\le j\le r$. As $\overline{S}=S\cup\{1\}$ and $\Lambda_j\subset\overline{S}$, we have that $\overline{S}$ has empty interior, hence $\Lambda_j$ is totally disconnected. By the Cantor-Bendixson Theorem, we conclude that $\Lambda_j$ is either a finite set or the union of a Cantor set with a discrete set. If all the attractors $\Lambda_1,\ldots,\Lambda_r$ are finite, then, by Lemmas \ref{lemaone} and \ref{34987}, all natural codings of $f$ are ultimately periodic and we are done. Otherwise, there are $1\le s\le r$ infinite attractors. Without loss of generality, assume that $\Lambda_1,\ldots,\Lambda_s$ are the infinite attractors.
It follows from Theorem \ref{proa} that for each $1\le j\le s$, there exists a non-atomic $f$-invariant Borel probability measure $\mu_j$ whose support is $\Lambda_j$. Hence, if
$$ \mu=\frac1s \mu_1 +\cdots+\frac1s \mu_s,\quad A=\Lambda_1\cup\cdots\cup \Lambda_s,$$ 
then $\mu$ is a non-atomic $f$-invariant Borel probability measure with support equal to $A$.

Let $h{:}\,[0,1]\to [0,1]$ be the nondecreasing, continuous, surjective map  defined by $h(t)=\mu\big( [0,t]\big)$, $t\in I$. Notice that
$h$ is strictly increasing on $A$ and  constant on each connected component of $I{\setminus}A$. Given $x,x'\in {I_i}$ with $h(x)=h(x')$, we claim that $h\big(f(x)\big)=h\big(f(x')\big)$. Since $f$ is injective, $f\vert_{{I}_i}$ is either increasing or decreasing. Without loss of generality, in what follows, assume that $f\vert_{I_i}$ is increasing (and continuous) for every $1\le i\le n$. Assume $x\le x'$, then 
$f(x)\le f(x')$. Moreover, since $f\vert_{I_i}$ is increasing and continuous,
 $$ [x,x')=f^{-1}\left(\big[f(x),f(x') \big) \right).$$
 Hence, since $\mu$ is non-atomic and $f$-invariant, we have that for any $x\le x'$ in $I_i$,
 \begin{equation}\label{1840}
 h\big(f(x') \big)-h\big(f(x) \big) =\mu\Big(\big[f(x),f(x')\big)\Big)=\mu\Big(f^{-1}\big(\big[f(x),f(x')\big)\big)\Big)=h(x')-h(x),
 \end{equation}
 which proves the claim.
 
We will use $(\ref{1840})$ to define an IET $T{:}\,I\to I$. Let
 $$\mathcal{I}=\big\{1\le i\le n: (x_{i-1},x_i)\cap A\neq\emptyset\big\},$$
 where $x_{i-1}$ and $x_i$ are the endpoints of $I_i$. Let $m\le n$ be the cardinality of $\mathcal{I}$, then we may write $\mathcal{I}=\{i_1,\ldots,i_{m}\}$. Let $0=y_0<y_1<\cdots<y_{m}=1$ be the points defined by $y_{\ell}=h(x_{i_{\ell}})$, $1\le \ell\le m$. Let $T{:}\,I\to I$ be the map that at $h(x)\in I{\setminus}\{y_0,y_1,\ldots,y_{m-1}\}$ takes the value
\begin{equation}\label{TTT}
 T\big(h(x)\big)=h\big(f(x)\big).
\end{equation}
The map $T$ is well-defined on $I{\setminus} \{y_0,y_1,\ldots,y_{m-1}\}$. To see that, let $x,x'\in I$, $x<x'$, be such that $h(x)=h(x')$ is a point in $I{\setminus} \{y_0,y_1,\ldots,y_{m-1}\}$. Then, $\{x,x'\}\subset\bigcup_{i\in\mathcal{I}} (x_{i-1},x_i)$, otherwise $x$ or $x'$ would belong to  $\{y_0,y_1,\ldots,y_{m-1}\}$. In this way, there exist $i,j\in\mathcal{I}$ such that
$x\in (x_{i-1},x_i)$ and $x'\in (x_{j-1},x_j)$. If $i\neq j$, then the hypothesis $h(x)=h(x')$ yields
$h(x)=h(x_{i})=h(x_{j-1})=h(x')$, showing that $h(x)\in \{y_0,y_1,\ldots,y_{m-1}\}$, which is a contradiction. Hence, the only alternative left is $i=j$ and $x,x'\in (x_{i-1},x_i)$. In this way, $x,x'$ belong to the same interval $I_i$ and $(\ref{1840})$ implies that $T\big(h(x)\big)=T\big(h(x')\big)$, thus $T$
 is well-defined on $I{\setminus} \{y_0,y_1,\ldots,y_{m-1}\}.$
 
Let us prove that $T\vert_{(y_{\ell-1},y_{\ell})}$, $1\le \ell\le m$, is a translation. If $y,y'$ are two points in $(y_{\ell-1},y_{\ell})$, there exist $x,x'\in (x_{i_{\ell}-1},x_{i_{\ell}})$ such that $y=h(x)$ and $y'=h(x')$, then $(\ref{1840})$ and $(\ref{TTT})$ yield $$
 T(y')-T(y)=T\big(h(x')\big)-T\big(h(x)\big)=h\big(f(x')\big)-h\big(f(x)\big)=h(x')-h(x)=y'-y,
 $$
 proving that $T\vert_{(y_{\ell-1},y_{\ell})}$ is a translation.  In particular, $T\vert_{(y_{\ell-1},y_{\ell})}$ is injective and 
 $T\big((y_{{\ell-1}},y_{{\ell}})\big)$ is an open interval for each $1\le \ell \le m$. Moreover, since $h$ is order-preserving, if
 $\ell\ne k$, then $h\big(x_{i_{\ell}-1},x_{i_{\ell}}\big)$ and $h\big(x_{i_{k}-1},x_{i_{k}}\big)$ are non-overlapping open intervals,
 implying that $T$ is (globally) injective on $I{\setminus} \{y_0,y_1,\ldots,y_{m}\}$. As for the definition of $T$ on the set $\{y_1,\ldots,y_{n-1}\}$, we can choose one of the lateral limits of $f$ as we approach each of these points in such a way that $T$ is, indeed, globally injective. In this way, $T$ is a $m$-IET. 
 
  We claim that $T$ has no attractive periodic orbit. In fact, if for each $1\le j\le s$, $\gamma_i$ is an infinite $f$-orbit dense in $\Lambda_i$, then the union of the  infinite $T$-orbits $T(\gamma_1)$, \ldots, $T(\gamma_s)$ is a dense subset of $I$, ruling out attractive periodic $T$-orbits.
 
Let $x\in I$ be a point whose natural $f$-coding is $\theta=\theta_0 \theta_1\ldots$, then we may assume that $\omega(x)$ 
is infinite, otherwise Lemma \ref{lemaone} says that $\theta$ would be ultimately periodic (i.e. $\exists k\ge 0$ such that the natural $f$-coding of $f^k(x)$ is periodic). By Corollary \ref{cor35}, there exists $k_0\ge 0$ such that $f^k(x)\in (x_{i_1-1},x_{i_1})\cup\cdots\cup (x_{i_m-1},x_{i_{m}})$ for all $k\ge k_0$. This means that the natural $f$-coding $\zeta=\zeta_0\zeta_1\ldots$ of
$f^{k_0}(x)$ is an infinite word over the alphabet $\mathcal{A}'=\{i_1,\ldots,i_{m}\}$. Let $\eta=\eta_0\eta_1\ldots$ be the natural $T$-coding
of $y=h(x)$, then $\zeta_j=i_\ell\in \{i_1,\ldots,i_{m}\}$ if and only if $\eta_j=\ell\in \{1,\ldots,m\}$, proving that $\zeta$ and $
\eta$ are isomorphic infinite words.

\section{Proofs of Theorem \ref{thm2} and Corollary \ref{cor2}}

\begin{proof}[Proof of Theorem \ref{thm2}] Let $T:I\to I$ be a topologically transitive $n$-IET and $J_1,\ldots,J_n$ be the associated partition. Without loss of generality we may assume that the endpoints of $J_i$ are $y_{i-1}$ and $y_i$, where $0=y_0<y_1<\cdots < y_n=1$. Let $\{p_k\}_{k=1}^\infty\subset I{\setminus}\{y_0,y_1,\ldots,y_{n-1}\}$ be a dense $T$-orbit. Given $k\ge 1$, let 
 \begin{equation}\label{G_k} \mathcal{L}_k=\{\ell\ge 1: p_\ell<p_k\}\quad\textrm{and}\quad
  G_k=\left[\sum_{\ell\in\mathcal{L}_k} 2^{-\ell},2^{-k}+\sum_{\ell\in\mathcal{L}_k} 2^{-\ell}\right].
 \end{equation}
 Notice that $p_k>0$ and $\mathcal{L}_k\neq\emptyset$. Hence, $G_k\subset (0,1)$ is a well-defined interval of length $\vert G_k\vert=2^{-k}$. We claim that $\{p_k\}_{k\ge 1}$ and $\{G_k\}_{k\ge 1}$ share the same ordering meaning that
 \begin{equation}\label{pk<pj}
 p_k<p_j\iff \sup G_k < \inf G_j.
 \end{equation}
 In fact, $p_k<p_j$ if and only if $ \{k\}\cup \mathcal{L}_k\subset \mathcal{L}_j$, which is equivalent to 
 $$\sup G_k=2^{-k}+\sum_{\ell\in\mathcal{L}_k} 2^{-\ell}<\sum_{\ell\in\mathcal{L}_j} 2^{-\ell}=\inf G_j.$$
 In particular, we have that the intervals $G_1,G_2,\ldots$ are pairwise disjoint and their union is dense because $\sum_{k=1}^\infty \vert G_k\vert=1$. Applying $(\ref{pk<pj})$ we conclude that if  $J\subset I$ is an interval and
 $$
 \{m_k\}_{k\ge 1}=\{\ell\ge 1: p_{\ell}\in J\},\quad \textrm{then}\,\, \overline{\cup_{k\ge 1} G_{m_k}}\quad\textrm{is an interval}. 
 $$
 
  Let $\widehat{h}{:}\,\cup_{k\ge1} G_k\to I$ be the function that on $G_k$ takes the constant value $p_k$. By $(\ref{pk<pj})$, we have that $\widehat{h}$ is nondecreasing and has  dense domain and dense range. Thus, $\widehat{h}$ admits a unique nondecreasing continuous surjective extension $h{:}\,[0,1]\to [0,1]$ to the whole interval $[0,1]$. It is elementary to see that
 $h^{-1}\big(\{p_k\}\big)={G_k}$. Denote by $I_1,\ldots,I_n$ the partition of $I$ defined by
 $I_i=h^{-1}(J_i)$. Notice that $x_i=h^{-1}(y_i)$, $0\le i\le n$, are the endpoints of the partition $I_1,\ldots,I_n$.
 
 Let $\widehat{f}:\cup_{k\ge 1} G_k \to \cup_{k\ge 2} G_k$ be such that $\widehat{f}\vert_{G_k}{:}\,G_k\to G_{k+1}$ is an  affine bijection with slope $\frac12 T'(p_k)$ for every $k\ge 1$, where $T'(p_k)\in \{-1,1\}$ is the derivative of $T$ at $p_k$. 
 We claim that for each $1\le i\le n$ , there exist a dense subset $\widehat{I}_i$ of $I_i$, $\lambda_i\in\left\{-\frac12,\frac12\right\}$ and $b_i\in\mathbb{R}$ such that
\begin{equation}\label{lxb}
\widehat{f}(x)=\lambda_i x+b_i\quad\textrm{for all}\quad x\in {\widehat I}_i.
\end{equation}
 In order to show that $(\ref{lxb})$ is true, fix $1\le i\le n$ and let $\{m_k\}_{k\ge 1}=\{\ell\ge 1:p_{\ell}\in J_i\}$,   
 then $\widehat{J_i}={\cup_{k\ge 1} \{p_{m_k}\}}$
  is a dense subset of $ {J_i}$ and  $\widehat{I_i}={\cup_{k\ge 1} G_{m_k}}$ is a dense subset of $I_i$. Moreover, there exists $\lambda_i\in\left\{-\frac12,\frac12\right\}$ such that $T'(y)=2\lambda_i$ for all $y\in J_i$. In particular, $T'(p_{m_k})=2\lambda_i$ for all $k\ge 1$. By definition, $\widehat{f}\vert_{G_{m_k}}{:}\,G_{m_k}\to G_{m_k+1}$ is an  affine bijection with slope $\frac12 T'(p_{m_k})=\lambda_i$ for all $k\ge 1$, which proves $(\ref{lxb})$. We have proved that there exist $\lambda_i\in\left\{-\frac12,\frac12\right\}$ and $c_{m_k}\in\mathbb{R}$ such that
  \begin{equation}\label{cmkcmk}
   \widehat{f}(x)=\lambda_i x + c_{m_k}\quad\textrm{for all}\quad x\in G_{m_k}.
  \end{equation}
 Let us prove that if $\lambda_i=\frac12$ $\big($respectively, $\lambda_i=-\frac12\big)$ then $\widehat{f}$ is strictly increasing (respectively, strictly decreasing) on $\cup_{k\ge 1} G_{m_k}$. Without loss of generality, assume that $\lambda_i=-\frac12$, then $\widehat{f}$ is strictly decreasing on each interval $G_{m_k}$. Let  $y_k<z_j$ be such that $y_k\in G_{m_k}$ and $z_j\in G_{m_j}$, where $k\neq j$ and $\sup G_{m_k}<\inf G_{m_j}$. By $(\ref{pk<pj})$, we have that  
  $p_{m_k}<p_{m_j}$ and $\{p_{m_k},p_{m_j}\}\subset J_i$. Then, since $T'(y)=2\lambda_i=-1$ for all $y\in J_i$, we have that $T\vert_{J_i}$ is decreasing, thus  $T(p_{m_k})>T(p_{m_j})$, that is,
  $p_{m_k+1}>p_{m_j+1}$. By $(\ref{pk<pj})$ once more, we get $\sup G_{m_j+1}<\inf G_{m_k+1}$.
   By definition, $f(y_k)\in G_{m_k+1}$ and $f(z_j)\in G_{m_j+1}$, thus $f(y_k)>f(z_j)$. This proves that $\widehat{f}$ is decreasing on $\cup_{k\ge 1} G_{m_k}$.  It remains to prove that $c_{m_k}$ in $(\ref{cmkcmk})$ is the same for all $k\ge 1$. Let $j\neq k$. We may assume that $a=\sup {G_{m_j}}<\inf G_{m_k}=b$. Notice that
   \begin{eqnarray*} 
   \frac12(b-a)+\frac{\vert\lambda_i\vert}{\lambda_i}(c_{m_k}-c_{m_j})&=&
    \frac{\vert\lambda_i\vert}{ \lambda_i}\big(\widehat{f}(b)-\widehat{f}(a)\big)= \sum_{G_{m_{\ell}}\subset [a,b]} \left\vert \widehat{f}\big(G_{m_{\ell}}\big)\right\vert \\ &=&\frac12 \sum_{G_{m_\ell}\subset [a,b]} |G_{m_{\ell}}|= \frac12 (b-a)
   \end{eqnarray*}
  yielding $c_{m_k}=c_{m_j}$. Thus, $(\ref{lxb})$ is true.
      
  It follows from $(\ref{lxb})$ that $\widehat{f}\vert_{\cup_{k\ge 1} G_{m_k}}$ admits a unique monotone continuous extension to the interval $I_i=h^{-1}(J_i)$. This extension is also an affine map with slope equal to $\frac12$ in absolute value. Since $i$ is arbitrary, we obtain an injective  piecewise $\frac12$-affine extension $f$ of $\widehat{f}$ to the whole interval $I=\cup_{i=1}^n I_i$. 
        
  It remains to show that $h\circ f=T\circ h$. In fact, for every $y\in G_k$, we have that 
\begin{equation}\label{eql}
h\big(f(y)\big)=\widehat{h}\big(\widehat{f}(y)\big)=p_{k+1}=T(p_k)=T\big(\widehat{h}(y)\big)=T\big(h(y)\big).
\end{equation}
Hence, $(\ref{eql})$ holds for a dense set of $y\in I$. By continuity, $(\ref{eql})$ holds for every $y\in I$.
\end{proof}

\begin{proof}[{Proof of Corollary \ref{cor2}}] Let $0<\alpha<1$ be irrational. Let $T{:}I\to I$ be the $2$-IET defined by $T(y)=y+\alpha\,\,({\rm mod}\,\,1)$, or equivalently, let $J_1=\left[0,1-\alpha\right)$, $J_2=\big[1-\alpha,1\big)$, and
$$T(y)=
\begin{cases} y+\alpha & \textrm{if}\quad y\in J_1 \\
y+\alpha -1 & \textrm{if}\quad y\in J_2 \\
\end{cases}.
$$
It is widely known that $T$ is minimal.
Hereafter, we take all the notation of the proof of Theorem $\ref{thm2}$. Let $y_0=0$, $y_1=1-\alpha$ and $y_2=1$.
Let $\gamma=\{p_k\}_{k=1}^{\infty}=\{\alpha,T(\alpha),\ldots\}$ be the $T$-orbit of $\alpha$, then $\gamma$ is a dense orbit contained in $I{\setminus}\{y_0,y_1\}$. Let $\theta=\theta_0 \theta_1\ldots$ be the natural $T$-coding of $\alpha$, then $\theta$ is a Sturmian word. Let us define the $2$-PC $f_T$. Let $G_k$, $k\ge 1$, be the pairwise disjoint intervals of length $\vert G_k\vert=2^{-k}$ defined by $(\ref{G_k})$. Let $I_i=h^{-1}(J_i)$ for $i=1,2$, then $I_1=[0,x_1)$, $I_2=[x_1,1)$, where $x_1=h^{-1}(y_1)$. Let 
$$\{m_k\}_{k\ge 1}=\{\ell\ge 1:p_{\ell}\in J_1\}=\{\ell\ge 1:\theta_{\ell-1}=1\},$$
 then $\widehat{J_1}={\cup_{k\ge 1} \{p_{m_k}\}}$
  is a dense subset of $ {J_1}$ and  $\widehat{I_1}={\cup_{k\ge 1} G_{m_k}}$ is a dense subset of $I_1$. In this way, since $\vert G_{m_k}\vert=2^{-m_k}$, we have that 
  $$ x_1=\sup I_1=\sum_{k\ge 1} \left\vert G_{m_k} \right\vert=\sum_{k\ge 1} 2^{-m_k}=\sum_{\ell\ge 1} (2-\theta_{\ell-1}) 2^{-\ell}=\frac12\sum_{\ell\ge 0} (2-\theta_\ell) 2^{-\ell}=2-\frac12\sum_{\ell\ge 0} \theta_{\ell} 2^{-\ell}.
  $$
  Since $T'(y)=1$ for every $y\in I$, we have that the slope $\lambda_i$ of $f_T$ is $\frac12$. In this way, we have that
  $$ f_T(x)=\begin{cases} \vspace{0.1cm} \dfrac12x + b_1& \textrm{if}\quad x\in [0,x_1) \\[0.1in]
  \dfrac12x + b_2& \textrm{if}\quad x\in [x_1,1) 
  \end{cases}.
  $$
  Since $\frac12 x_1+b_1=1$ and $\frac12x_1+b_2=0$, we conclude that
 \begin{equation*}\label{hide}
  f_T(x)=\frac12x + \delta,\quad \textrm{where}\quad \delta=\frac14\sum_{\ell\ge 0} \theta_{\ell} 2^{-\ell}.
  \end{equation*}
  It is clear that
  \begin{equation}\label{ddd}
  \delta=\frac14\left(1+\theta_0+\sum_{\ell\ge 1} (\theta_\ell-1)2^{-\ell}\right),
  \end{equation}
  thus $\{\theta_\ell-1\}_{\ell\ge 1}$ is the binary expansion of $\sum_{\ell\ge 1} (\theta_\ell-1)2^{-\ell}$. In this case, the transcendence of $\delta$ follows from Ferenczi and Mauduit \cite[Proposition 2]{FM} or Adamczewski and Cassaigne \cite[Theorem 1]{AC} together with the fact that $w=(\theta_1-1)(\theta_2-2)\ldots$ is a Sturmian word. 
  
 Now let us consider the particular case in which $\alpha=2-\varphi$, where $\varphi=(1+\sqrt{5})/2$ is the golden ratio. In this
 case, it is known that $(\theta_1-1)(\theta_2-1)\ldots$ is the Fibonacci word
 $$
 \theta-1=010010100100101001010010010100100101001010010010100\ldots
 $$
 The number 
 $$R=1-\sum_{\ell\ge 0} (\theta_{\ell}-1)2^{-(\ell+1)}=0.7098034428612913146\ldots$$
  is known in the mathematical literature as the rabbit constant. Notice that by $(\ref{ddd})$, we have that
  $$   \delta=\frac14\left(1+\theta_0+2\sum_{\ell\ge 1} (\theta_\ell-1)2^{-(\ell+1)}\right)=\frac14\left(2+2\sum_{\ell\ge 0} (\theta_\ell-1)2^{-(\ell+1)}\right)=1-\frac{R}{2}.
  $$
  The transcendence of the rabbit constant was proved by Davison \cite{D}.

 \end{proof}
 
 \section{Proofs of Corollary \ref{cor0} and Corollary \ref{cor1}}\vspace{0.5cm}

 \begin{proof}[Proof of Corollary \ref{cor0}]  
Let  $\theta=\theta_0\theta_1\ldots$ be a natural coding of  an injective $n$-PC $f{:}\,I\to I$. By Theorem \ref{thm1}, there exist a $m$-IET $T{:}\,I\to I$ with $2\le m\le n$ and $q\ge 0$ such that the infinite word $\theta^*=\theta_q\theta_{q+1}\ldots$ is either periodic or isomorphic to the non ultimately periodic natural $T$-coding $\omega=\omega_0 \omega_1\ldots$ of some point $y\in I$. For the sake of simplification, we will only consider the case in which
$T$ is an orientation-preserving $m$-IET with associated partition $I_1=[y_{0},y_1)$, $\ldots$, $I_m=[y_{m-1},y_m)$. Since $\omega$ is non ultimately periodic, there exist $r\ge 0$ and $y^*=T^r(y)$ whose $T$-orbit is regular, which means
 $$O_T(y^*)=\left\{T^r(y),T^{r+1}(y),\ldots\right\}\subset I{\setminus} \{y_0,y_1,\ldots,y_{m-1}\}.$$
 Because $O_T(y *)$ is regular, it is entirely contained in a minimal component of $T$. More specifically, there exist open intervals  $A_1,\ldots,A_p$ with pairwise disjoint closures such that $O_T(y^*)$ is a dense subset of $A_1\cup\cdots \cup A_p$ and
 $T(A_1)\subset A_{2}$, \ldots, $T(A_{p-1})\subset A_p$, $T(A_p)\subset A_1$, and $T$ takes $I{\setminus} (\overline{A_1}\cup\cdots\cup\overline{A_p})$ into itself
  (see \cite{MB2,NPT}).  Let $\mu$ be the normalized Lebesgue measure on $A_1\cup \cdots A_p$, then $\mu$ is $T$-invariant: $\mu\big(T^{-1}(B)\big)=\mu(B)$ for every Borel set $B\subset I$. Let $h{:}\,[0,1]\to [0,1]$ be the nondecreasing, continuous, surjective map  defined by $h(t)=\mu\big( [0,t]\big)$, $t\in I$. Notice that $h$ is strictly increasing on ${A}_1\cup \cdots\cup {A_p}$ and  constant on each of the finitely many connected components of $I{\setminus}{A}_1\cup \cdots\cup {A_p}$.
 Given $y,y'\in I_i$ with $h(y)=h(y')$, we claim that $h\big(T(y)\big)=h\big(T(y')\big)$. Without loss of generality, assume that $y\le y'$, then $T(y)\le T(y')$. Moreover, since $T\vert_{I_i}$ is a translation,
 $$ [y,y']=T^{-1}\left(\big[T(y),T(y') \big] \right).$$
 Hence, since $\mu$ is non-atomic and $T$-invariant, we have that for any $y,y'\in I_i$,
 \begin{equation}\label{1832}
 h\big(T(y') \big)-h\big(T(y) \big) =\mu\Big(\big[T(y),T(y')\big]\Big)=\mu\Big(T^{-1}\big(\big[T(y),T(y')\big]\big)\Big)=h(y')-h(y),
 \end{equation}
 which proves the claim.
 
We will use $(\ref{1832})$ to define an IET $E{:}\,I\to I$. Let
 $$\mathcal{I}=\big\{1\le i\le m:I_i\cap (A_1\cup \cdots\cup A_p)\neq\emptyset\big\}.$$
Let $m'\le m$ be the cardinality of $\mathcal{I}$, then we may write $\mathcal{I}=\{i_1,\ldots,i_{m'}\}$. The intervals $J_1=h\big(I_{i_1}\big)$, $\ldots$, $J_{m'}=h\big(I_{i_{m'}}\big)$ form a partition of $I$ into non-degenerate intervals with endpoints $0=z_0<z_1<\cdots<z_{m'}=1$ defined by $z_\ell=h(y_{i_\ell})$, $0\le \ell\le m'$. Let $E{:}I\to I$ be the right-continuous map that at $z=h(y)\in I{\setminus}\{z_0,z_1,\ldots,z_{m'-1}\}$ takes the value
\begin{equation}\label{EH321} 
E\big(h(y)\big)=h\big(T(y)\big).
\end{equation}
The map $E$ is well-defined. In fact, if $y,y'\in I$ are such that $h(y)=h(y')$, then $y,y'$ belong to the same connected component of $I{\setminus} (A_1\cup \cdots \cup A_p)$. There is no discontinuity of $T$ between $y$ and $y'$, otherwise $h(y)$ would belong to $\in \{z_1,\ldots,z_{m'-1}\}$. In this way, $y,y'$ belong to the same interval $I_i$ and $(\ref{1832})$ asserts that $E$ is well-defined. Notice that, by definition, $E(z_{\ell})=\lim_{\epsilon\to 0^+} E(z_{\ell}+\epsilon)$ for all $0\le \ell\le m'-1$.
 
Let us prove that $E\vert_{(z_{\ell-1},z_{\ell})}$, $1\le \ell\le m'$, is a translation. If $z,z'$ are two points in   $(z_{\ell-1},z_{\ell})$, then there exist $y,y' \in \big(y_{i_{\ell-1}},y_{i_\ell}\big)$ such that $z=h(y)$ and $z'=h(y')$. Now $(\ref{1832})$ and $(\ref{EH321})$ yield
 $$
 E(z')-E(z)=E\big(h(y')\big)-E\big(h(y)\big)=h\big(T(y')\big)-h\big(T(y)\big)=h(y')-h(y)=z'-z,
 $$
 proving that $E\vert_{J_\ell}$ is a translation. 
 
 The map $E$ is surjective. In fact, since $h$ and $T$ are surjective, given $z\in I$, there exists $y\in I$ such that
 $E\big(h(y)\big)=h\big(T(y)\big)=z$. To see that $E$ is also injective, by the above, $E$ takes each interval $J_\ell$ into its translate $E(J_\ell)$, which therefore has the same length, that is, $\left | E(J_\ell)\right |=\left | J_\ell\right |$. Since $E$ is surjective, we have that $$1=\sum_{\ell=1}^{m'} \left\vert E(J_\ell)\right\vert\le\sum_{\ell=1}^{m'} \left\vert J_\ell\right\vert\le 1,$$
implying that no overlapping is possible for the intervals $E(J_1),\ldots,E(J_{m'})$. This proves that $E$ is a $m'$-IET.

Becasuse $O_T(y^*)$ is a dense subset of $A_1\cup \cdots\cup A_p$ and $h(A_1\cup \cdots\cup A_p)$ is dense in $I$, we have that $h$ takes the $T$-orbit $O_T(y^*)$ onto a dense $E$-orbit, thus $E$ is topologically transitive. Moreover, if $\zeta=\zeta_0\zeta_1\ldots$ is the natural $T$-coding of $y^*$ and $\eta=\eta_0\eta_1\ldots$ is the natural $E$-coding
of $z^*=h(y^*)$, then $\zeta_k=i_\ell\in \{i_1,\ldots,i_{m'}\}$ if and only if $\eta_k=\ell\in \{1,\ldots,m'\}$, proving that $\zeta$ and $
\eta$ are isomorphic infinite words. To conclude the proof, we recall that $\theta_{q+r} \theta_{q+r+1}\ldots$ is isomorphic to  $\zeta$.

     \end{proof}  
     
 \begin{lemma}\label{sd} Let $\theta=\theta_0\theta_1\ldots$ be an infinite word  and $\theta^*=\theta_{q+1} \theta_{q+2}\ldots$ an infinite subword of $\theta$, then there exist   $k_0\ge 1$ and $\beta\ge 0$ such that
 $$
 p_k(\theta)=p_k(\theta^*)+\beta\quad\textrm{for every}\quad k\ge k_0.
 $$
 \end{lemma}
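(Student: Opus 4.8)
The plan is to reduce the statement to a purely combinatorial fact about the sequence of integers $\beta_k := p_k(\theta)-p_k(\theta^*)$, namely that it is nonnegative, bounded, and nondecreasing, hence eventually constant. Once that is established, choosing $k_0$ past the point of stabilization and $\beta$ equal to the stable value gives $p_k(\theta)=p_k(\theta^*)+\beta$ for all $k\ge k_0$.

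First I would observe that every length-$k$ factor of $\theta^*=\theta_{q+1}\theta_{q+2}\ldots$ is a length-$k$ factor of $\theta$ occurring at a position $\ge q+1$, so $L_k(\theta^*)\subseteq L_k(\theta)$ and thus $\beta_k=\#\bigl(L_k(\theta)\setminus L_k(\theta^*)\bigr)\ge 0$. Next, a word $w$ belongs to $L_k(\theta)\setminus L_k(\theta^*)$ exactly when $w$ occurs in $\theta$ but \emph{every} occurrence of $w$ in $\theta$ starts at a position $m\le q$; write $E_k$ for the set of such words, so that $\beta_k=\#E_k$. Since each element of $E_k$ equals $\theta_m\theta_{m+1}\cdots\theta_{m+k-1}$ for some $m\in\{0,1,\ldots,q\}$, there are at most $q+1$ of them, giving the uniform bound $\beta_k\le q+1$.

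The key step is monotonicity, which I would prove by constructing an injection $\phi\colon E_k\to E_{k+1}$. Given $w\in E_k$, let $m_0$ be the smallest index with $w=\theta_{m_0}\cdots\theta_{m_0+k-1}$ and put $\phi(w)=\theta_{m_0}\cdots\theta_{m_0+k}$, the length-$(k+1)$ word extending $w$ by one letter to the right, so that $w$ is the length-$k$ prefix of $\phi(w)$. Then $\phi(w)\in E_{k+1}$: if $\phi(w)$ occurs in $\theta$ at a position $m'$, then its length-$k$ prefix, which is $w$, occurs at $m'$ as well, forcing $m'\le q$ because $w\in E_k$. Injectivity is immediate, since $w$ is recovered from $\phi(w)$ as its first $k$ letters. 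Hence $\#E_k\le\#E_{k+1}$, i.e. $\beta_k\le\beta_{k+1}$.

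To conclude, a nondecreasing sequence of integers bounded above by $q+1$ is constant from some index $k_0\ge 1$ onward; letting $\beta\ge 0$ be its stable value yields $p_k(\theta)=p_k(\theta^*)+\beta$ for every $k\ge k_0$, as desired. I do not anticipate any real obstacle: the lemma is elementary, and the only point deserving a moment's care is the verification that $\phi(w)$ actually lands in $E_{k+1}$ and not merely in $L_{k+1}(\theta)$ — this rests on the simple observation that any occurrence of the extended word $\phi(w)$ automatically contains an occurrence of $w$ at the same starting position.
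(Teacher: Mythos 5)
Your proof is correct and follows essentially the same approach as the paper: both identify the difference set $E_k = L_k(\theta)\setminus L_k(\theta^*)$ (the paper calls it $\mathcal{W}_k^*$), bound its cardinality by $q+1$ using the fact that each such word must start at one of the positions $0,\ldots,q$, observe that $k\mapsto\#E_k$ is nondecreasing, and conclude eventual constancy. The only difference is that the paper merely asserts the monotonicity while you supply the explicit injection $\phi\colon E_k\to E_{k+1}$ by extending each word one letter to the right at its leftmost occurrence — a welcome piece of detail that the paper leaves to the reader.
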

 \begin{proof} For each $k\ge q+1$, let
 \begin{eqnarray*}\mathcal{W}_k&=&\left\{\theta_0\theta_1\ldots\theta_{k-1},\quad \theta_1\theta_2\ldots\theta_{k},\quad \ldots,\quad\theta_q\theta_{q+1}\ldots\theta_{q+k-1}\right\}\subset L_k(\theta)
\\\mathcal{W}^*_k&=&\{\omega\in\mathcal{W}_k: \omega\not\in L_k(\theta^*)\}
\end{eqnarray*}
 Notice that $\mathcal{W}_k$ is formed by at most $q+1$ distinct finite words and
  $k\mapsto\#\mathcal{W}_k^*$ is a nondecreasing map, thus there exist $k_0\ge 0$ and $\beta\le q+1$ such that
  $\#\mathcal{W}_k=\beta$ for every $k\ge k_0$. Moreover, for every $k\ge k_0$, we have the disjoint union
  $$L_k(\theta)=\mathcal{W}_k^*{\cup} L_k(\theta^*),\quad\textrm{thus}\quad p_k(\theta)=p_k(\theta^*)+\beta.$$
  \end{proof}

 \begin{lemma}\label{lem47} Let $\theta$ be a natural coding of a topologically transitive $m$-IET $T{:}I\to I$, then there exist $k_0\ge 1$, $\alpha\in \{0,\ldots, m-1\}$ and  $\beta\ge 1$ such that 
 \begin{equation}\label{wr}
 p_\theta(k)=k\alpha+\beta\quad\textrm{for every}\quad k\ge k_0.
 \end{equation}
  Moreover, if $T$ is a standard $m$-IET, with $m\ge 2$, satisfying the 
  i.d.o.c., then $\alpha=m-1$, $\beta=1$ and $k_0=1$.
 \end{lemma}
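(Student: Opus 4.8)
The plan is to split on whether $\theta$ is ultimately periodic and, in the non-ultimately-periodic case, to reduce the computation of $p_\theta$ to a count of the cylinders of a single interval exchange. If $\theta=uv^\infty$, then $p_\theta$ is bounded by $|uv|$ and non-decreasing, hence eventually constant equal to some $\beta\ge1$, and the claim holds with $\alpha=0$. So assume $\theta=\theta_T(y)$ is not ultimately periodic. Because $T$ has at most $m-1$ discontinuities and $\theta$ is not ultimately periodic, the forward orbit of $y$ meets each discontinuity at most once (a second visit would force periodicity of the coding), so it is eventually regular; this is precisely the situation treated inside the proof of Corollary~\ref{cor0}, with our $T$ playing the role of the transitive IET produced there by Theorem~\ref{thm1}. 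Running that construction yields an integer $r\ge0$, a topologically transitive $m'$-IET $E{:}\,I\to I$ with $2\le m'\le m$, and a point $z^*\in I$ whose $E$-orbit is dense in $I$, such that $\theta_r\theta_{r+1}\cdots$ is isomorphic to $\eta:=\theta_E(z^*)$. Since isomorphic infinite words have the same complexity function, Lemma~\ref{sd} gives $k_1\ge1$ and $\beta_1\ge0$ with $p_\theta(k)=p_\eta(k)+\beta_1$ for all $k\ge k_1$, so it suffices to understand $p_\eta$.

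The heart of the argument is a count of the cylinders of $E$. Write $0=e_0<e_1<\cdots<e_{m'}=1$ for the endpoints of the defining partition of $E$, $D=\{e_1,\dots,e_{m'-1}\}$, and $V_w=\bigcap_{j=0}^{k-1}E^{-j}(I_{w_j})$ for a length-$k$ word $w$. The nonempty $V_w$ are non-degenerate intervals partitioning $I$; because the orbit of $z^*$ is dense, $p_\eta(k)=\#\{w:V_w\neq\emptyset\}$, which equals $1$ plus the number of left endpoints of these intervals lying in $(0,1)$. A point of $(0,1)$ is such a left endpoint precisely when the length-$k$ $E$-coding jumps there, i.e.\ when it lies in $\bigcup_{j=0}^{k-1}E^{-j}(D)$, where $E^{-j}$ is the inverse of the bijection $E^{j}$ (I use here that $E$ is discontinuous only on $D$, so its iterates are discontinuous only on pullbacks of $D$). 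Going from level $k$ to level $k+1$ adjoins the $m'-1$ points $E^{-k}(e_i)$, and $E^{-k}(e_i)$ is an already-present breakpoint exactly when $e_i=E^{a}(e_{i'})$ for some $i'$ and some $1\le a\le k$. Putting $c_i=\min\{a\ge1:E^{a}(e_{i'})=e_i\text{ for some }i'\}\in\{1,2,\dots\}\cup\{\infty\}$, one gets $s(k):=p_\eta(k+1)-p_\eta(k)=\#\{i:c_i>k\}$ for every sufficiently large $k$; the only discrepancy for small $k$ is caused by indices $i$ for which $E^{-k}(e_i)$ equals the endpoint $0$, and since $E^{-1}(0)$ is itself a point of $D$ (were it not, $E$ could not be topologically transitive), such an event forces $c_i\le k+1$ and hence happens, for each $i$, only at the single level $k=c_i-1$.

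The rest is assembly. The map $k\mapsto\#\{i:c_i>k\}$ is non-increasing, so for $k\ge k_2:=\max(\{c_i:c_i<\infty\}\cup\{1\})$ we have $s(k)=\alpha:=\#\{i:c_i=\infty\}\in\{0,\dots,m'-1\}$; hence $p_\eta$ is affine with slope $\alpha$ on $[k_2,\infty)$. Since $E$ is topologically transitive its orbit visits all $m'$ intervals, so $p_\eta(1)=m'$; and since a $0$-coincidence at level $j$ can only suppress the single index with $c_i=j+1$, one has $s(j)\ge\#\{i:c_i>j+1\}\ge\alpha$ for every $j$, so $p_\eta(k)=m'+\sum_{j=1}^{k-1}s(j)=\alpha k+\beta'$ on $[k_2,\infty)$ with $\beta'\ge m'+(k_2-1)\alpha-\alpha k_2=m'-\alpha\ge1$. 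Feeding this into $p_\theta(k)=p_\eta(k)+\beta_1$ gives $p_\theta(k)=\alpha k+\beta$ for all $k\ge k_0:=\max(k_1,k_2)$, with $\alpha\in\{0,\dots,m-1\}$ and $\beta:=\beta'+\beta_1\ge1$, which is the first assertion.

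For the ``moreover'' part, if $T$ is standard, satisfies the i.d.o.c.\ and $m\ge2$, then $T$ is irreducible (a reducible IET is not transitive), hence minimal, and its language does not depend on the base point, so no reduction is needed and one applies the cylinder count directly to $T$ with $m'=m$ and $\beta_1=0$. The i.d.o.c.\ says no discontinuity ever lands on a discontinuity, so every $c_i=\infty$; thus $k_2=1$, $\alpha=m-1$, $s(k)=m-1$ for all $k\ge1$, and from $p_\theta(1)=m$ we obtain $p_\theta(k)=(m-1)k+1$ for every $k\ge1$, i.e.\ $\alpha=m-1$, $\beta=1$, $k_0=1$ — which also matches the value of the complexity function recalled in the Introduction. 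I expect the most delicate step to be the bookkeeping of endpoints in the second paragraph: checking that the left endpoints of the level-$k$ cylinders are exactly the points of $\bigcup_{j<k}E^{-j}(D)$ in $(0,1)$, that distinct $e_i$ give distinct pullbacks, that nonempty cylinders are non-degenerate, and especially that the endpoint $0$ contributes only the finite, explicitly located correction above; the rest is straightforward once the reduction of the first paragraph is in place.
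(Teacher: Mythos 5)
Your proposal is correct and rests on the same core idea as the paper's proof: split on whether $\theta$ is ultimately periodic (Morse--Hedlund handles $\alpha=0$), pass to a regular tail orbit, count the division points of the refinements $\bigvee_{\ell<k}T^{-\ell}(\mathscr{P})$, observe that the number of new division points per level is eventually constant, and transfer back via Lemma~\ref{sd}. The genuine difference is in how you reach a regular \emph{dense} orbit. The paper simply asserts that for a topologically transitive $T$ there is $q$ with the orbit of $T^{q+1}(x)$ dense in $I\setminus\{x_0,\ldots,x_{m-1}\}$ (this follows from the structure theorem for IETs with flips, since transitivity forces a unique quasi-minimal component filling $I$), whereas you re-run the collapsing construction from the proof of Corollary~\ref{cor0} to produce an auxiliary IET $E$ and a dense $E$-orbit, which sidesteps the structure-theorem input at the cost of extra machinery; for a transitive $T$ that construction in fact degenerates to $E=T$ and $m'=m$. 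Your $c_i$-bookkeeping is a reformulation of the paper's $m_\ell$, and you are somewhat more explicit than the paper about two points: the possibility of a pullback $E^{-k}(e_i)$ landing on the endpoint $0$ (which the paper subsumes by counting intervals with non-empty interior, so that with the half-open convention there are no degenerate cylinders and the boundary effect you worry about does not arise), and the estimate $s(j)\ge\alpha$ giving the explicit lower bound $\beta'\ge m'-\alpha\ge 1$, where the paper only notes $\alpha\ge 1$ and lets $\beta$ come out of the computation. Both the main statement and the i.d.o.c.\ specialization are handled the same way as in the paper.
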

 \begin{proof}  Let $T{:}\,I\to I$ be a topologically transitive $m$-IET and $\mathscr{P}=\{I_1,\ldots,I_m\}$ be the partition associated to $T$, then, since $T^{-1}$ is also an IET,
 $T^{-k}(\mathscr{P})$ is a partition of $I$ into  intervals for every $k\ge 0$, implying that the members of the set $$ \mathscr{P}_k=\bigwedge_{\ell=0}^{k-1} T^{-\ell}(\mathscr{P})=\left\{I_{i_0}\cap T^{-1}\big(I_{i_1}\big)\cap \cdots\cap T^{-(k-1)}\big(I_{i_{k-1}}\big): 1\le i_0,i_1,\ldots,i_{k-1}\le m\right\}. 
 $$
are pairwise disjoint intervals. Moreover, if $\theta$ is a natural coding of $T$, then the $k$-word 
$i_0 i_1\ldots i_{k-1}$ occurs in $\theta$ if and only if  the interval $J=I_{i_0}\cap T^{-1}\big(I_{i_1}\big)\cap \cdots\cap T^{-(k-1)}\big(I_{i_{k-1}}\big)\in \mathscr{P}_k$ is nom-empty.

 Let $\theta$ be the natural $T$-coding of some point $x\in I$. If $\theta$ is (ultimately) periodic, then by the Morse-Hedlund Theorem, there exist $k_0\ge 1$ and $\beta\ge 1$ such that $p_{\theta}(k)=\beta$ for every $k\ge k_0$, meaning that $(\ref{wr})$ holds with $\alpha=0$. Hence, we may assume that $\theta$ is not (ultimately) periodic. In this case,  there exists $q\ge 0$ such that the orbit $\{x^*,T(x^*),\ldots\}$ of $x^*=T^{q+1}(x)$ is a dense subset of $I{\setminus} \{x_0,x_1,\ldots,x_{m-1}\}$, where $0=x_0<x_1<\cdots <x_{m}=1$ are the endpoints of the partition $\mathscr{P}$. 
In this way, for each $k\ge 1$, $\{x^*,T(x^*),\ldots\}$ is contained in the union of the interiors of the intervals of $\mathscr{P}_k$. Hence, the $k$-word 
$i_0 i_1\ldots i_{k-1}$ occurs in the natural $T$-coding $\theta^*$ of $x^*$ if and only if  the interval $J=I_{i_0}\cap T^{-1}\big(I_{i_1}\big)\cap \cdots\cap T^{-(k-1)}\big(I_{i_{k-1}}\big)\in \mathscr{P}_k$ has non-empty interior. Therefore, the number of such intervals $J$ in $\mathscr{P}_k$
equals $p_k(\theta^*)$ and is related to the number of endpoints of the partition $\mathscr{P}_k$ as follows
\begin{equation}\label{pk}
 p_k(\theta^*)=1+\sum_{\ell=0}^{k-1} m_{\ell}, 
  \end{equation}
 where $m_0=m-1$ and
$$
  m_{\ell}=\left\{T^{-\ell}(x_1),\ldots,T^{-\ell}(x_{m-1})\right\}{\bigg\backslash} \bigcup_{p=0}^{\ell-1}\left\{T^{-p}(x_1),\ldots,T^{-p}(x_{m-1})\right\}
  $$
  gives the number of new division points at the $\ell$-th step towards the construction of $\mathscr{P}_k$. The map $\ell\mapsto m_{\ell}$ is a non-increasing, therefore there exist $k_0'\ge 0$ and $\alpha\ge 1$
  such that  $m_{\ell}=\alpha$ for every $\ell\ge k_0'$. Notice that $\alpha\ge 1$ because, as $\theta^*$ is not (ultimately) periodic, $p_k(\theta^*)\to\infty$ as $k\to\infty$.   Let $\beta_0,\beta_1,\ldots,\beta_{k_0-1}\ge 0$ be such that 
 \begin{equation}\label{nl}
 m_{\ell}=
 \begin{cases} \alpha+\beta_{\ell} & \textrm{if} \quad \ell \in \{0,1,\ldots,k_0'-1\}  \\
 \alpha &  \textrm{if} \quad \ell \ge k_0'
 \end{cases}.
 \end{equation} 
 By $(\ref{pk})$ and $(\ref{nl})$, we have that if $\beta'=1+\beta_0+\beta_1+\cdots+\beta_{k_0-1}$, then
  $$ p_k(\theta^*)=1+\sum_{\ell=0}^{k_0'-1} (\alpha+\beta_{\ell})+ \sum_{k_0'}^{k-1} \alpha= \alpha k + \beta' \quad\textrm{for all}\quad k\ge k_0'+1.$$
  By Lemma \ref{sd}, there exist $k_0\ge k_0'+1$ and $\beta''\ge 0$ such that
$$p_k(\theta)=p_k(\theta^*)+\beta''=\alpha k + (\underbrace{\beta'+\beta''}_{\beta})=\alpha k+\beta\quad\textrm{for all}\quad k\ge k_0.
$$
  Notice that if $T$ satisfies the i.d.o.c., then $\theta^*=\theta$ and $m_{\ell}=m-1$ for all $\ell\ge 0$, then $(\ref{pk})$ yields
  $$ p_k(\theta)=p_k(\theta^*)= (m-1) k +1 \quad \textrm{for all}\quad k\ge 1,
  $$  
  implying that in this case $(\ref{wr})$ holds with $\alpha=m-1$, $\beta=1$ and $k_0=1$. 
 \end{proof}
 
 \begin{proof}[Proof of Corollary \ref{cor1}] Let $f{:}I\to I$ be an injective $n$-PC and $\theta=\theta_0\theta_1\ldots$ be the natural $f$-coding of $x\in I$. By Corollary \ref{cor0}, there exist $k\ge 0$ and a topologically transitive $m$-IET, with $2\le m\le n$, such that the natural coding $\theta^*$ of $f^k(x)$ is either periodic or isomorphic to a non ultimately periodic natural coding of $T$. By Lemma \ref{lem47}, there exist $k_0\ge 1$, $\alpha\in \{0,\ldots, m-1\}$ and  $\beta\ge 1$ such that 
 \begin{equation}\label{a89}
 p_k(\theta^*)=k\alpha+\beta\quad\textrm{for all}\quad k\ge k_0.
 \end{equation}
 Notice that in the case in which $\theta^*$ is periodic, by the Morse-Hedlund Theorem,
 $(\ref{a89})$ holds with $\alpha=0$. To conclude the proof of the item $(i)$,  apply Lemma \ref{sd}. 
 As for tye item $(ii)$, we apply Theorem \ref{thm2} together with Lemma \ref{lem47}.
\end{proof}

\end{document}